\newtheorem{theorem}{Theorem}[section] 
\newtheorem{lemma}[theorem]{Lemma}     
\newtheorem{corollary}[theorem]{Corollary}
\newtheorem{proposition}[theorem]{Proposition}
\theoremstyle{definition}
\newtheorem{definition}{Definition}
\newtheorem{remark}{Remark}
\newtheorem{example}{Example}
\providecommand{\setB}{\mathbb{B}}
\providecommand{\setN}{\mathbb{N}}
\providecommand{\setQ}{\mathbb{Q}}
\providecommand{\setR}{\mathbb{R}}
\providecommand{\setS}{\mathbb{S}}
\providecommand{\setU}{\mathbb{U}}
\providecommand{\setZ}{\mathbb{Z}}
\providecommand{\tensor}{\otimes}
\providecommand{\eps}{\varepsilon}
\providecommand{\inoo}[1]{\left({#1 }\right)}
\providecommand{\incc}[1]{\left[{#1 }\right]}
\providecommand{\calC}{\mathcal{C}}
\providecommand{\calN}{\mathcal{N}}
\providecommand{\calU}{\mathcal{U}}
\providecommand{\calV}{\mathcal{V}}
\providecommand{\calW}{\mathcal{W}}
\providecommand{\conj}[1]{\overline{#1 }}
\providecommand{\quot}[2]{{#1}-slash/{#2}}
\providecommand{\frakX}{\mathfrak{X}}
\providecommand{\abs}[1]{\left\vert #1 \right\vert }
\providecommand{\diff}[1]{\frac{\mathrm{d}}{\mathrm{d}#1}}
\providecommand{\lra}{\longrightarrow }
\providecommand{\ssmall}[1]{\mbox{\begin{scriptsize}$#1$\end{scriptsize}}}
\DeclareMathOperator{\id}{id}
\DeclareMathOperator{\fix}{Fix}
\title[Equivariant Genericity]{Genericity in equivariant dynamical systems and equivariant Fuller index theory}
\author{Philipp Wruck}
\begin{document}
\begin{abstract}
We define a notion of equivariant non-degeneracy of $G$-maps to introduce the class of equivariantly non-degenerate flows on smooth compact manifolds with compact Lie group action. 
We prove genericity of this class and use this result to construct an equivariant version of the Fuller index, which detects group orbits of periodic orbits of the flow, distinguished by 
their isotropy.
\end{abstract}

\maketitle

\section{Introduction}
In 1967, B. Fuller in \cite{fuller} defined a rational homotopy invariant for flows that has properties similar to those of the classical fixed point index, as is discussed in e.g. 
\cite{nussbaum}. Locally, Fuller's index is the fixed point index of a Poincar\'{e} map for a periodic orbit, multiplied with the inverse $\frac 1k$ of its multiplicity. Chow and 
Mallet--Paret gave a construction of Fuller's index using bifurcation theory in \cite{chowmallet}. In particular, they clarified the dynamical reason why the correcting factor $\frac 1k$ 
has to be introduced, which is the occurrence of period multiplying bifurcations. 

In the last decades, there have been considerable advances in the theory of indices for equivariant maps. Ize and Vignoli \cite{izevignoli} defined an equivariant mapping degree, compare 
also \cite{balanov}. Equivariant fixed point indices have been constructed by L\"{u}ck and Rosenberg in \cite{lueck} for discrete groups and independently by Dzedzej in \cite{dzedzej} 
and Chorny \cite{ynrohc} for arbitrary compact Lie groups. The basic feature of such invariants is the property that, in addition to the classical properties of an index, they can detect 
the symmetries of fixed points (or zeros, in the case of the equivariant degree). 

In this paper we are going to construct a Fuller-like index for equivariant flows. This will be an element in the group $\bigoplus_{(H)}\setQ$, where $(H)$ runs through the orbit types of 
the underlying $G$-manifold $M$. It will be invariant under equivariant homotopies of vector fields whose periodic orbits have a prescribed period in a compact interval, bounded away from 
zero. Furthermore, non-vanishing of its $(H)$-component will imply existence of a periodic orbit of the field of isotropy type at least $(H)$.  

The construction of the index requires methods of generic equivariant bifurcation theory, which we will develop first. Though most of these results are well-known to experts, it is hard to 
find a concrete reference. Therefore, we develop the generic equivariant bifurcation picture from scratch. The point of view taken here is that genericity should be viewed as a sort of 
equivariant transversality of a graph map to the diagonal. Crucial results like the genericity of finiteness of fixed points and finiteness of bifurcation parameters are obtained easily 
from the theory of equivariant transversality. This abstract point of view allows to generalize the non-equivariant construction of Chow and Mallet--Paret almost verbatim, after carefully 
checking that every step is admissible. We also emphasize that the construction is not possible when using the notion of $G$-hyperbolicity. The generality of equivariant non-degeneracy is 
crucial to obtain the generic equivariant bifurcation picture of homotopies needed for this approach. 

The paper is organized as follows. After some preliminaries, we will define equivariant Poincar\'{e} systems in Section 3 to be able to investigate vector fields by means of associated 
equivariant Poincar\'{e} maps. In Section 4, we review the theory of transversality to stratified sets, which is the basis for the theory of equivariant transversality, and we quote the 
corresponding genericity result, the Thom-Mather Theorem \ref{thm:thommatherstrat}.

Section 5 is central to the paper. We define the notion of equivariant non-degeneracy and prove the corresponding equivariant Thom-Mather Theorem \ref{thm:equithommathernondeg}. We also 
establish via Theorem \ref{cor:equicodimdisc} that $G$-homotopies generically avoid certain high-codimensional subsets. This is crucial for the applications to fixed point theory.

In Section 6, we use a different, directly geometric approach to establish genericity theorems for vector fields and their homotopies. The basic method is to reduce the problems to
investigations of equivariant Poincar\'{e} maps, and then to apply the results of Section 5 to these maps. Though the theory of equivariant non-degeneracy helps on the level of Poincar\'{e}
maps, it can not be used directly for vector fields, since we are dealing with flows instead of general $G$-maps. Theorem \ref{thm:equigenvect} together with Theorem 
\ref{thm:equithommathernondeg} establishes a complete bifurcation picture for equivariant maps and equivariant vector fields. Generically, finitely many branches of critical elements 
undergo finitely many bifurcations.

In Section 7, we will give a definition of an equivariant Fuller index based on the techniques of Chow and Mallet--Paret. Then we will use the generic bifurcation picture to prove that the 
equivariant Fuller index is indeed $G$-homotopy invariant.


\section{Preliminaries} 
We begin by establishing some notions and notations from the theory of equivariant topology and dynamical systems.

In this paper, $G$ will always denote a compact Lie group. An action of $G$ on a topological space $X$ is a map $G\times X\to X$, denoted by $(g, x)\mapsto g.x$, such that $(gh).x=g.(h.x)$ 
and $e.x=x$ for all $g, h\in G$, $x\in X$. We sometimes also deal with maps $X\times G\to X$ with the obvious modifications and then speak of left and right $G$-actions, respectively. Maps 
between topological spaces will always be continuous, and maps between smooth manifolds will always be smooth, unless stated otherwise explicitly .

The following spaces will turn up frequently. The orbit of a point $x\in X$ is the set 
\[
Gx=\{g.x\;|\;g\in G\}.
\]
The isotropy subgroup $G_x$ of a point $x\in X$ is the group
\[
 G_x=\{g\in G\;|\;g.x=x\}.
\]
There is the well-known homeomorphism $\quot G{G_x}\cong Gx$, which is a diffeomorphism if $X$ is a manifold. If $H$ is any closed subgroup of $G$, we let $(H)$ denote its equivalence class 
under conjugacy, i.e. $H\cong K\Longleftrightarrow\;\exists\;g\in G:\,gHg^{-1}=K$. The isotropy type of a point $x\in X$ is the conjugacy class $(G_x)$ of its isotropy subgroup. For 
$x\in X$, $G_{gx}=gG_xg^{-1}$, and hence, $(G_x)=(G_{gx})$. There is a partial order on the set of isotropy types, or more general on the set of conjugacy classes of closed subgroups. It 
is defined by
\[
 (H)\leq(K)\Longleftrightarrow\,\exists\;g\in G\,:gHg^{-1}\subseteq K.
\]
With this notion, we can define various fixed spaces in $X$, namely
\[
 X^H=\{x\in X\;|\;h.x=x\;\forall\,h\in H\},
\]
\[
 X_{(H)}=\{x\in X\;|\;(G_x)=(H)\},
\]
\[
 X_{\geq(H)}=\{x\in X\;|\;(G_x)\geq(H)\}.
\]
Intersections of these spaces will be labelled in the obvious way, e.g. $X^H_{(H)}$. If $X$ is a manifold, the sets $X^H$ and $X_{(H)}$ are submanifolds of $X$. In general, $X_{(H)}$ is 
$G$-invariant and $X^H$ admits an action of the normalizer $N(H)$ of $H$ in $G$. Since $H$ fixes $X^H$, there is an induced action of the so called Weyl group $W(H)=\quot{N(H)}H$. This is 
not to be confused with the Weyl group of a Lie group $G$, which is, in our words, the Weyl group of a maximal torus $T\subseteq G$.

An important construction we will need is the twisted product of a right $H$-space $X$ with a left $H$-space $Y$. The product $X\times Y$ carries the (left) $H$-action $h.(x, y)=(x.h^{-1}, 
h.y)$, and the quotient space is denoted by $X\times_H Y$. If $X$ also carries a left $G$-action that is compatible with the $H$-action, namely $(g.x).h=g.(x.h)$ for all 
$x\in X$, $g\in G$, $h\in H$, then $X\times_H Y$ carries a left $G$-action by $g.[x,y]=[g.x, y]$. Here, $[x,y]$ denotes the equivalence of $(x, y)$ in the group quotient. In most cases, 
we will have $X=G$, $H$ a subgroup of $G$, and the actions on $G$ will be left- and right translation. 

Most prominently, twisted products turn up as tubular neighbourhoods of $G$-orbits. Every $G$-orbit $Gx$ in a $G$-manifold $M$ has a neighbourhood $G$-diffeomorphic to a space 
$G\times_HV$, where $V$ is an $H$-representation. If $M$ is Riemannian, $V$ can be taken to be the normal space of $Gx$ at $x$, that is, the orthogonal complement of $T_xGx$ in $T_xM$. 
The image of $V$ under such a diffeomorphism is called a slice at $x$, in the Riemannian case it is called a normal slice. The neighbourhood itself is called a tubular neighbourhood of $Gx$.
From e.g. \cite{bredon}, it is well known that any compact $G$-manifold with $G$ a compact Lie group admits the structure of a Riemannian manifold such that $G$ acts via isometries. We 
will therefore assume henceforth that the Riemannian structure is implicitly included in the notion of a $G$-manifold. In particular, we will be able to speak of orthogonality of 
subspaces of tangential spaces. For more details on the general theory of $G$-manifolds, see \cite{bredon}. We refer to this book as well when it comes to structural results of 
$G$-spaces and $G$-manifolds.

In the following, we assume that $X=M$ is a manifold and all maps under consideration are smooth.

From the dynamical viewpoint, we will be interested in fixed points of maps and periodic orbits of flows. A flow is nothing else than a (right) action of $\setR$ on $M$, so an equivariant 
flow $\varphi:M\times\setR\to M$ gives rise to a $(G\times\setR)$-action on $M$ by $(g, t).x=\varphi(g.x, t)$. Fixed points and periodic orbits are called the critical elements of either 
the map $f$ or the flow $\varphi$, respectively. It is obvious that the sets of fixed points and periodic orbits of an equivariant map or an equivariant vector field are $G$-invariant. 

In addition to the above, a periodic orbit, for us, is always to be considered together with a fixed period $T>0$ so that $\varphi(x, T)=x$. Furthermore, we assume that all periodic orbits 
have a minimal $p>0$ satisfying this condition, i.e. the periodic orbit is not a fixed point of the flow. If we have to investigate the underlying subspace of $M$, given by 
$\gamma=(G\times\setR).x$, we speak of the geometric periodic orbit. 

All our genericity investigations will use the Whitney topology on mapping spaces of smooth maps. For smooth manifolds $M, N$, we will denote such spaces by $\calC^\infty(M, N)$. The 
subspace of equivariant maps with respect to a smooth $G$-action is denoted by $\calC^\infty_G(M, N)$. In most cases, when the involved spaces are compact, the Whitney topology agrees 
with the $\calC^\infty$-topology of uniform convergence of the map itself and all its derivatives. 

A simple notion of non-degeneracy for equivariant critical elements is provided by the notion of $G$-hyperbolicity. Recall from \cite{field} or \cite{hirsch} that a fixed $G$-orbit of an 
equivariant self map is $G$-hyperbolic, if the tangential bundle over the orbit splits into a central bundle, corresponding to the directions of the group action, an expanding and a 
contracting bundle. Once we have equivariant Poincar\'{e} systems at hand, this concept can be carried over to define $G$-hyperbolic periodic orbits.

It is shown in \cite{field} that $G$-hyperbolicity is generic for $G$-maps and $G$-vector fields. $G$-hyperbolic critical elements are isolated and depend continuously on the map or flow. 
Furthermore, the orbit type of $G$-hyperbolic critical elements is locally constant. The notion of $G$-hyperbolicity will be too rigid for our needs, for example it is not a simple task to
define $G$-hyperbolicity of homotopies. We will however sometimes use the aforementioned genericity results.


\section{Equivariant Poincar\'{e} Systems} 
Our method of investigating periodic orbits relies on the notion of equivariant Poincar\'{e} systems, which shall be developed in the following. In contrast to the common 
definition of equivariant Poincar\'{e} systems, our systems will in general not be centered at a $G$-orbit that is contained in a periodic orbit. The reason for this is that later 
on, we want to be able to push such a system along a branch of fixed points of the associated Poincar\'{e} maps. If these maps undergo a saddle-node bifurcation, we will have fixed points 
on one side of the critical parameter, but no fixed points on the other side. We still want to be able to grasp this situation by means of Poincar\'{e} systems. Therefore, our systems do 
not start with $G$-orbits in a periodic orbit, but we start with a $G$-orbit that returns to a given equivariant disc, see Definition \ref{defi:equidisc} below, after a given 
time.

\begin{definition}\label{defi:equidisc} 
Let $M$ be a $G$-manifold, $\gamma\subseteq M$ a $G$-orbit and $U\cong G\times _HV$ a tubular neighbourhood for $Gx$, $H=G_x$. Assume that $\dim V^H>0$. Let $L$ be a $1$-dimensional 
subspace of $V^H$ and $L^\bot$ an invariant orthogonal complement in $V$. Let $p:V\to L$ be the orthogonal projection and $\pi:G\times_HV\to G\times_HL,\;[g,v]\mapsto[g,pv]$. The image of
the pullback of this bundle to the orbit $Gx$ in $U$ is called an equivariant disc in $M$, centered at $\gamma$. The image in $U$ of the pullback of a disc subbundle of $\pi$ is called an 
equivariant subdisc (of the disc defined above). 
\end{definition}

More generally, the above definition gives the notion of a codimension 1 equivariant disc, but we will not need greater generality. Under the identification $U\cong G\times_HV$, an 
equivariant disc centered at $[G, 0]$ can be identified with the subspace $G\times_H\setB(L^\bot)$, where $\setB(W)$ denotes the unit ball in $W$. 

Equivariant discs are the possible domain of definition for equivariant Poincar\'{e} maps. Let $\xi$ be an equivariant vector field, $\varphi$ its flow. Assume $\gamma$ is a $G$-orbit that 
is not fixed by the flow, i.e. there is a $t>0$ such that $\varphi(x, t)\notin\gamma$ for some $x\in\gamma$. Then there exists an $\eps>0$ such that $\varphi$ is an equivariant embedding of 
$\gamma\times\inoo{-\eps,\eps}$ into $M$. If $(H)$ is the orbit type of $\gamma$, in a suitable tubular neighbourhood $G\times_HV$ of $\gamma$ this can be interpreted as the embedding of 
the subspace $G\times_HL$ into $G\times_HV$, where $L\subseteq V^H$ is a one-dimensional subspace (corresponding to the direction of the flow). Let $D$ be the equivariant disc defined by 
$G\times_HV$ and $L$. Assume that $\gamma$ returns to $D$ in finite time. Since $\gamma$ is not fixed by the flow, there is a minimal $T>0$ such that $\varphi(\gamma, T)\subseteq D$ and 
in addition, $\xi(x)\notin T_xD$ for all $x\in D$. By continuity of $\varphi$, there is a subdisc $D'\subseteq D$ containing $\gamma$ and a continuous invariant function $t:D'\to\setR^+$, 
$t(\gamma)=T$, such that $\varphi(y,t(y))\in D$ for all $y\in D'$. By the implicit function theorem, $t$ is smooth and thus, the map
\[
 P:D'\to D,\;y\mapsto\varphi(y, t(y))
\]
is a smooth equivariant embedding. 

\begin{definition}
The collection $(D, D', P, t)$ is called an equivariant Poincar\'{e} system, centered at $\gamma$. $P$ is the equivariant Poincar\'{e} map.
\end{definition}

It is crucial that equivariant Poincar\'{e} systems are stable under small perturbations in the defining vector field.

\begin{proposition} 
Let $\xi:M\to TM$ be an equivariant vector field, $\varphi$ its flow, $\gamma\subseteq M$ a $G$-orbit that is not fixed by $\varphi$. Let $(D, D', P_0, t_0)$ be an equivariant Poincar\'{e}
system around $\gamma$. Then after possibly shrinking $D$, there is a neighbourhood $\calU$ of $\xi$ and a continuous map $t:\calU\to\calC^\infty_G(D',\setR^+)$ such that 
$(D, D', P(\eta), t(\eta))$ is a Poincar\'{e} system for $\eta\in\calU$, $P(\xi)=P$, $t(\xi)=t_0$, and $P(\eta)$ is defined in the obvious way.
\end{proposition}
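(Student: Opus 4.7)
The approach is to recast the return condition $\varphi_\eta(y,s)\in D$ as a scalar implicit equation and invoke the implicit function theorem with $\eta$ as a parameter. First I would produce an equivariant defining function for $D$: in the tubular neighbourhood $U\cong G\times_HV$ of $\gamma$, the disc $D$ lies inside $G\times_HL^\perp$, and the orthogonal projection $p\colon V\to L$ descends to a smooth $G$-invariant map $\psi\colon U\to L\cong\setR$ with $D\subseteq\psi^{-1}(0)$. Well-definedness on the twisted product uses that $L\subseteq V^H$ is $H$-fixed and $L^\perp$ is $H$-invariant, so $p$ commutes with the $H$-action. The return condition, at points sufficiently close to $P_0(\gamma)$, then reads $F(\eta,y,s):=\psi(\varphi_\eta(y,s))=0$.

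At $(\xi,y,t_0(y))$ with $y\in D'$ we have $F=0$, and $\partial_sF=d\psi(\xi(P_0(y)))$ is nowhere zero on the compact set $P_0(\overline{D'})$ by the transversality $\xi(x)\notin T_xD$ built into the Poincaré system. The smooth implicit function theorem applied pointwise in $y$ yields a unique local solution $s=t(\eta)(y)$ near $t_0(y)$, smooth in $y$ and continuously dependent on $\eta$ in each $\calC^k$-topology. Compactness of $\overline{D'}$ (after possibly shrinking $D$) lets finitely many local patches cover; by uniqueness they assemble into a single smooth function $t(\eta)\in\calC^\infty(D',\setR^+)$ defined for $\eta$ in a common Whitney neighbourhood $\calU$ of $\xi$. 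Equivariance of $\psi$, $\varphi_\eta$, and $D'$ combined with uniqueness forces $t(\eta)$ to be $G$-invariant. Setting $P(\eta)(y):=\varphi_\eta(y,t(\eta)(y))$ then defines an equivariant smooth map landing in $\psi^{-1}(0)$ with $P(\xi)=P_0$.

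The remaining Poincaré-system conditions---that $P(\eta)(y)\in D$ rather than elsewhere in $\psi^{-1}(0)$, that $P(\eta)$ be an equivariant embedding, and that $\eta(x)\notin T_xD$ for $x\in D$---are all open in the $\calC^1$-topology and hold for $\xi$, so they persist after possibly further shrinking $D$ and $\calU$. The main technical delicacy is continuity of $\eta\mapsto t(\eta)$ in the Whitney topology, since the space of equivariant smooth vector fields on $M$ is not Banach and no direct Banach-space parametric implicit function theorem applies. I would handle this by observing that the $\calC^k$-norm of $t(\eta)$ is controlled by finitely many derivatives of the flow map $\varphi_\eta$ on the compact time interval in question, and that $\eta\mapsto\varphi_\eta$ is $\calC^k$-continuous in these data for each $k$; assembling the estimates over all $k$ gives continuity in the Whitney topology on the target.
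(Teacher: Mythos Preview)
Your proposal is correct and follows essentially the same approach as the paper: both arguments reduce the return condition to a scalar equation and invoke the implicit function theorem, using the transversality $\xi(x)\notin T_xD$ to guarantee the nonvanishing $s$-derivative. The paper's proof is considerably terser (it simply asserts that transversality persists in a neighbourhood of $\xi$, that the return time is therefore well-defined, and that ``continuous dependence is obvious by the implicit function theorem''), whereas you spell out the construction of the equivariant defining function, the equivariance of $t(\eta)$, and the Whitney-topology continuity issue in more detail than the paper does.
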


\begin{proof} 
Since $\xi(x)\notin T_xD$, there is a neighbourhood of $\xi$ such that for all fields $\eta$ in this neighbourhood, $\eta(x)\notin T_xD$, after possibly passing to a subdisc. Hence, there 
is a small $\eps_x>0$ such that $\psi(x, t)\notin D$ for $0<t<\eps_x$ and $x\in D$ ($\psi$ the flow of $\eta$). So the return time map $t'$ is well-defined by $\psi(y, t'(y))\in D$ and 
$t'(\gamma)$ close to $T$. Continuous dependence is obvious by the implicit function theorem.
\end{proof}


\section{Transversality To Stratified Sets}

The theory of equivariant transversality is based on transversality to stratified sets, and we will therefore quickly review this theory. For proofs and a more detailed discussion, we once
more refer to \cite{field}, \cite{mather} and \cite{bochnak}. 

A stratified set is a topological space $X$ together with a decomposition $X=\bigcup_\alpha X_\alpha$ into a locally finite disjoint union of subspaces $X_\alpha$. The $X_\alpha$ are 
required to be smooth manifolds, usually non-compact.

It is apparent that in this generality, stratified spaces are not very well-behaved objects. We therefore assume firstly that the stratified space $X$ is a subset of a smooth manifold $M$.
Furthermore, we impose a condition on the stratification, which is known as Whitney's condition (b).

\begin{definition}
 A stratified space $X$ satisfies Whitney's condition (b), or is Whitney regular for short, if the following holds. Let $X_\alpha$, $X_\beta$ be two different strata of $X$ and
 $\{x_n\}_{n\in\setN}$, $\{y_n\}_{n\in\setN}$ be sequences in $X_\alpha$, $X_\beta$, respectively, converging to a point $x\in X_\alpha$. In a local coordinate system at $x$, let $L_n$ be 
 the line joining $x_n$ and $y_n$. Then, assuming that $L_n$ converges to a one-dimensional subspace $L$ of $T_xM$ and the spaces $T_{y_n}X_\beta$ converge to a subspace $E$ of
 $T_xM$, we must have $L\subseteq E$. 
\end{definition}

Whitney's condition (b) implies Whitney's condition (a), which states that, whenever a sequence $\{x_n\}_{n\in\setN}$ in a stratum $X_\beta$ converges to a point $x$ in $X_\alpha$ and the 
subspaces $T_{x_n}X_\beta$ converge to a subspace $E$ of $T_xM$, then $T_x{X_\alpha}\subseteq E$.

Transversality to a stratified set can be defined in a straightforward manner.

\begin{definition}
 Let $X$ be a stratified subspaces of the smooth manifold $N$, $M$ a smooth manifold. $f:M\to N$ is said to be transverse to $X$ at $x\in M$, if $f$ is transverse at $x$ to every stratum 
 of $X$, i.e. if $f(x)\in X_\alpha$, then
\[
 T_xf(T_xM)+T_{f(x)}X_\alpha=T_{f(x)}N.
\]
\end{definition}

The most important examples of stratified spaces are algebraic sets or more generally semialgebraic sets. A semialgebraic set is a set $Q\subseteq\setR^n$ such that there are polynomial maps
$p:\setR^n\to\setR^k$, $q:\setR^n\to\setR^m$ with
\[
 Q=\{x\in\setR^n\;|\;p(x)=0,\;q(x)>0\},
\]
where $q(x)>0$ stands for the same relation componentwise. Semialgebraic sets come equipped with a canonical stratification. We elaborate in the following what that means. Let 
$X\subseteq\setR^n$ be a subset and $S$ be a stratification of $X$, that is, $S$ is a collection of manifolds $X_\alpha$ constituting a stratification. For $i\in\setN$ we denote by $S_i$ 
the set of strata of $S$ of dimension $i$. Then a stratification $S$ is said to be minimal if, whenever $S'$ is a Whitney regular stratification, then there is an index $\ell\in\setN$ such
that $S_i=S_i'$ for $i>\ell$ and $S_\ell\supsetneq S_\ell'$. In this sense, a minimal stratification is the coarsest stratification of $X$. The following result is due to Mather, see 
Section 4 of \cite{mather}.

\begin{proposition}
 If a minimal stratification exists, then it is Whitney regular. Every semialgebraic set has a minimal stratification.
\end{proposition}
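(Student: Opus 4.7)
The plan is to address the two assertions in turn, invoking standard results from semialgebraic geometry for the heavy analytic lifting.

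\emph{Existence for semialgebraic sets.} I would construct the minimal stratification of $Q\subseteq\setR^n$ by a downward induction on dimension. Let $d=\dim Q$ and let $Q^{\mathrm{reg}}$ be the set of points at which $Q$ is locally a smooth $d$-dimensional submanifold of $\setR^n$. By the Tarski--Seidenberg principle together with the implicit function theorem applied to the defining polynomials, $Q^{\mathrm{reg}}$ is semialgebraic and open in $Q$, and the complement $Q\setminus Q^{\mathrm{reg}}$ is semialgebraic of dimension strictly less than $d$. I would declare $Q^{\mathrm{reg}}$ (as a single, possibly disconnected manifold) to be the top-dimensional stratum and recurse on $Q\setminus Q^{\mathrm{reg}}$. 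Since the dimension strictly drops at each step, the induction terminates after at most $d+1$ stages. To check that the resulting stratification $S$ is minimal, I would argue that in any Whitney regular stratification $S'$, each $d$-dimensional stratum of $S'$ must be contained in $Q^{\mathrm{reg}}$ (strata are smooth manifolds of the right dimension), so the single stratum $Q^{\mathrm{reg}}$ of $S$ is the coarsest possible top layer; iterating this observation downward produces the required comparison $S_\ell\supsetneq S'_\ell$ at the first dimension of disagreement.

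\emph{Minimality implies Whitney regularity.} The plan is a proof by contradiction based on a local refinement lemma. Suppose $S=\{X_\alpha\}$ is minimal but fails condition (b) at some pair $(X_\alpha, X_\beta)$ with $\dim X_\alpha=d$. The key analytic input, which I would cite from Mather rather than reprove, is that the locus $B\subseteq X_\alpha$ where condition (b) fails is closed in $X_\alpha$ and of dimension strictly less than $d$. Granting this, I would produce a Whitney regular refinement $S'$ by replacing $X_\alpha$ with the manifold $X_\alpha\setminus B$ and inserting a Whitney stratification of $B$ into the lower-dimensional layers, iterating over all offending pairs. The resulting $S'$ agrees with $S$ in every dimension strictly greater than $d$, but differs from $S$ at dimension $d$ in a controlled way dictated by the refinement procedure. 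Applying the minimality hypothesis to the Whitney regular $S'$ forces $S_\ell\supsetneq S'_\ell$ at the first dimension $\ell\le d$ of disagreement; comparing this with the explicit structure of the refinement at that dimension yields the desired contradiction.

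The principal obstacle is the classical dimension bound on the bad set $B$ for condition (b). This is a nontrivial theorem (ultimately grounded in the curve selection lemma in the semialgebraic category) that lies at the heart of Mather's stratification theory, and I would simply cite Mather or Bochnak for it. Everything else reduces to a careful dimension-by-dimension bookkeeping of strata, both in verifying minimality of the constructed stratification in the semialgebraic case and in extracting the contradiction in the first half.
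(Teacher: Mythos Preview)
The paper does not supply a proof of this proposition; it is simply quoted as a result of Mather (Section~4 of \cite{mather}) and no argument is given. So there is nothing in the paper against which to compare your attempt.

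Your sketch is broadly in the spirit of the standard treatment and correctly isolates the one genuinely hard analytic ingredient --- that the Whitney-(b) bad locus inside a stratum is closed and of strictly smaller dimension --- which you rightly propose to cite rather than reprove. One structural remark is worth making. Mather builds the canonical stratification directly as a Whitney regular object, by removing at each stage both the singular locus \emph{and} the Whitney-bad locus relative to the strata already laid down, and then verifies minimality afterward; the implication ``minimal $\Rightarrow$ Whitney regular'' then drops out from uniqueness of the construction rather than from a separate contradiction argument. Your route inverts this: you first build the naive regular-locus stratification, argue it is minimal, and only then invoke the first half to obtain Whitney regularity. That ordering is logically permissible, but note that your contradiction argument in the first half itself requires producing a Whitney regular refinement $S'$, and the ``iterating over all offending pairs'' step you allude to is essentially Mather's construction in disguise. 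So the hard work is the same either way; you have just relocated where it is cited.
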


We will mostly deal with algebraic sets. Transversality to an algebraic set will always mean transversality to the minimal stratification. We also call the minimal stratification the 
canonical stratification.

It turns out, compare \cite{mather} or \cite{bochnak}, that Whitney's condition (a) ensures that the set of maps transverse to a stratification is open. Whitney's condition (b) is needed 
for density and further properties. We quote the final result from \cite{field}, Theorem 3.9.1.

\begin{theorem}\label{thm:thommatherstrat}
 Let $M, N$ be smooth compact manifolds and $X\subseteq N$ a closed Whitney stratified subset.
\begin{enumerate}
 \item The set $\calN$ of smooth maps $f:M\to N$ that are transverse to $X$ is open and dense in $\calC^\infty(M, N)$.

 \item If $H:M\times\incc{0,1}\to M$ is a smooth homotopy and $H_t$ is transverse to $X$ for every $t\in\incc{0,1}$, then there is a continuous isotopy $k:M\times\incc{0,1}\to M$ such that
 $k_0=\id_M$ and $k_t(H_t^{-1}(X))=H_0^{-1}(X)$.
\end{enumerate}
\end{theorem}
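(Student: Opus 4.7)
The plan is to prove openness and density separately in part (1), then reduce part (2) to Thom's first isotopy lemma. Throughout, the two Whitney conditions play distinct roles: (a) for openness of transversality, (b) for the isotopy statement.

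For \textbf{openness} in part (1), fix $f\in\calN$ and $x\in M$, so $f(x)$ lies in a unique stratum $X_\alpha$. Transversality of $f$ to the single submanifold $X_\alpha$ at $x$ is a classical open condition on $(f,x)$. The danger is that a nearby map $g$ may send a point $y$ close to $x$ into a neighbouring stratum $X_\beta$ with $X_\alpha\subseteq\overline{X_\beta}$, where transversality might fail. Whitney's condition (a) eliminates precisely this obstruction: any subspace $E$ of $T_xN$ obtained as a limit of $T_{y_n}X_\beta$ for $y_n\in X_\beta$, $y_n\to x$, necessarily contains $T_xX_\alpha$. Together with continuity of $Tg$ this forces transversality of $g$ to $X_\beta$ at $y$ provided $g$ is $C^1$-close to $f$ and $y$ is close to $x$. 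A finite covering argument, justified by compactness of $M$ and local finiteness of the stratification, upgrades this to openness in $\calC^\infty(M,N)$.

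For \textbf{density}, enumerate the strata in order of non-decreasing dimension; locally there are only finitely many. The classical (non-stratified) Thom transversality theorem applied to one stratum at a time produces arbitrarily small perturbations achieving transversality to that stratum. Using the openness just established, one performs these perturbations successively, each one small enough to preserve transversalities already secured. The resulting sequence converges in the Whitney topology to an element of $\calN$.

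For \textbf{part (2)}, form the augmented map $\tilde H:M\times\incc{0,1}\to N\times\incc{0,1}$, $\tilde H(x,t)=(H_t(x),t)$. Since each $H_t$ is transverse to $X$ and the $t$-coordinate is preserved, $\tilde H$ is transverse to the Whitney regular stratification of $X\times\incc{0,1}$. Thus $Z:=\tilde H^{-1}(X\times\incc{0,1})$ inherits a Whitney regular stratification of $M\times\incc{0,1}$, and the projection $\pi:Z\to\incc{0,1}$ is a proper stratified submersion (properness from compactness of $M$). Thom's first isotopy lemma then supplies a continuous vector field $V$ on $M\times\incc{0,1}$ that lifts $\partial_t$, is tangent to every stratum of $Z$, and integrates to a flow of the form $(x,s)\mapsto(k_s(x),s+t)$. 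By construction $k_0=\id_M$, and tangency of $V$ to strata yields $k_t(H_t^{-1}(X))=H_0^{-1}(X)$.

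\textbf{Main obstacle.} The decisive step is the construction of the continuous controlled lift of $\partial_t$ in part (2). A naive choice of lift, stratum by stratum, is smooth on each stratum but typically discontinuous across strata. To glue these into a genuinely continuous vector field whose flow preserves the stratification requires Mather control data on $Z$: a coherent system of tubular neighbourhoods $\pi_\alpha$ and tube functions $\rho_\alpha$ of the strata satisfying standard commutation relations on overlaps. Existence and compatibility of such control data is exactly what Whitney's condition (b) secures, and checking that the resulting piecewise vector field integrates to a flow continuous in both space and time parameter is the genuinely technical core of the argument.
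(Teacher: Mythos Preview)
The paper does not prove this theorem; it explicitly quotes it from \cite{field}, Theorem~3.9.1, so there is no in-paper proof to compare against. Your sketch is the standard route (Whitney~(a) for openness, stratum-by-stratum perturbation for density, Thom's first isotopy lemma via control data for part~(2)) and is essentially correct as an outline.

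One small discrepancy worth noting: the paper remarks, just before stating the theorem, that ``Whitney's condition (b) is needed for density and further properties,'' whereas you invoke condition~(b) only for the isotopy statement in part~(2) and obtain density from classical Thom transversality plus the openness already secured by condition~(a). Your attribution is the more accurate one for the argument you actually give: density of maps transverse to a locally finite family of submanifolds needs no regularity hypothesis beyond what is required to run the inductive perturbation, and it is the isotopy lemma that genuinely consumes condition~(b) through the existence of Mather control data. The paper's remark is presumably a looser allusion to the full package of consequences rather than a precise claim about where~(b) enters, but you should be aware of the mismatch. Also note that the theorem as stated in the paper has $H:M\times\incc{0,1}\to M$ while $X\subseteq N$; you have silently (and reasonably) read the codomain as $N$.
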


In view of bifurcation theory it is useful to look at maps dependend on a parameter. Hence, let $H:M\times\Lambda\to N$ be a smooth map, where $\Lambda$ is a smooth manifold as well.
This is of course just a special case of what has been done so far. But interpreting the space $\Lambda$ as a parameter space, we would like to know what it means for the various maps
$H_\lambda=H(\cdot, \lambda)$ to come from a map $H$ transverse to some set $X$. 

\begin{proposition}\label{prop:parastrat}
 Let $M, N, \Lambda$ be smooth manifolds and $X\subseteq N$ a closed Whitney stratified subset. Let $H:M\times\Lambda\to N$ be a smooth map transverse to $X$. If $M$ is compact, the set 
 of parameters $\{\lambda\in\Lambda\;|\;H_\lambda\mbox{ transverse to }X\}$ is open and dense in $\Lambda$.
\end{proposition}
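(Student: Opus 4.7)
The plan is to separate the statement into an openness and a density claim, handling openness directly from the Whitney (a)-condition and treating density via a parametric Sard argument applied stratum by stratum.

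For openness, I would fix $\lambda_0 \in \Lambda$ with $H_{\lambda_0}$ transverse to $X$ and argue that nearby $H_\lambda$ are transverse too. Since $M$ is compact, the assignment $\Lambda \to \calC^\infty(M,N),\;\lambda \mapsto H_\lambda$ is continuous in the Whitney topology (which agrees with the uniform $\calC^\infty$-topology on maps out of compact $M$). By the remark quoted from \cite{mather} and \cite{bochnak} just before Theorem \ref{thm:thommatherstrat}, Whitney's condition (a) on the stratification of $X$ together with compactness of $M$ makes transversality to $X$ an open condition in $\calC^\infty(M,N)$. Pulling this open set back along $\lambda \mapsto H_\lambda$ yields the desired open neighbourhood of $\lambda_0$.

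For density I would use the standard parameter trick. Enumerate the (locally finite, hence at most countable) strata $X_\alpha$ of $X$. Since $H$ is transverse to each $X_\alpha$, the preimage $W_\alpha := H^{-1}(X_\alpha) \subseteq M \times \Lambda$ is a smooth submanifold. Let $\pi_\alpha : W_\alpha \to \Lambda$ be the restriction of the projection on the second factor. A direct linear-algebra check shows that for $(x,\lambda) \in W_\alpha$, the differential $d\pi_\alpha$ is surjective at $(x,\lambda)$ if and only if $T_x H_\lambda(T_xM) + T_{H(x,\lambda)}X_\alpha = T_{H(x,\lambda)}N$, i.e.\ iff $H_\lambda$ is transverse to $X_\alpha$ at $x$. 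Consequently $\lambda$ is a regular value of $\pi_\alpha$ exactly when $H_\lambda$ is transverse to $X_\alpha$ on all of $H_\lambda^{-1}(X_\alpha)$. By Sard's theorem, the set of critical values $C_\alpha \subseteq \Lambda$ of $\pi_\alpha$ has Lebesgue measure zero.

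The set of parameters $\lambda$ for which $H_\lambda$ fails to be transverse to some stratum is then contained in $\bigcup_\alpha C_\alpha$, a countable union of measure-zero sets and hence of measure zero. Its complement, which contains $\{\lambda : H_\lambda \text{ transverse to } X\}$, therefore has full measure and is in particular dense in $\Lambda$. Combined with the openness established above, this completes the proof. The only genuine technical point I expect is the linear-algebra equivalence between surjectivity of $d\pi_\alpha$ and transversality of $H_\lambda$ to $X_\alpha$; once that is in place, all other steps are standard applications of Sard's theorem and of the continuity of parameter evaluation into the Whitney topology under compactness of $M$.
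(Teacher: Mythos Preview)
Your proposal is correct and follows essentially the same route the paper indicates: the paper defers the proof to the equivariant analogue, Proposition \ref{prop:equiparatrans}, whose argument is precisely the parametric Sard trick you describe---apply Sard's theorem to the projection $\pi$ restricted to each stratum of the (Whitney-stratified) preimage, using the linear-algebra equivalence between regularity of $\pi$ and transversality of $H_\lambda$. The only cosmetic difference is that the paper phrases this via the induced stratification of $H^{-1}(X)$ (Proposition \ref{prop:transpreimage}), whose strata are exactly your $W_\alpha=H^{-1}(X_\alpha)$, so the two formulations coincide. One small slip: in your last paragraph the inclusion is reversed---the complement of $\bigcup_\alpha C_\alpha$ is \emph{contained in} $\{\lambda: H_\lambda\text{ transverse to }X\}$, not the other way around; this is what you need, and the conclusion stands.
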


\begin{proof}
 We will give a proof of this fact for $G$-transversality, compare Proposition \ref{prop:equiparatrans}. The proof itself is very similar to what is needed here, so we leave the details 
 to the interested reader.
\end{proof}

We need one more fact on Whitney regular stratifications, regarding the preimages under a transverse map. Classically, the preimage of a submanifold under a map transverse to that manifold
is a manifold itself. A similar result holds for maps transverse to a Whitney stratification. This is Corollary 8.8 of \cite{mather}

\begin{proposition}\label{prop:transpreimage}
 Let $M, N$ be smooth manifolds and $X\subseteq N$ a closed Whitney stratified subset. If $f$ is transverse to $X$, then the sets $f^{-1}(X_\alpha)$ constitute a Whitney regular 
 stratification of $f^{-1}(X)$.
\end{proposition}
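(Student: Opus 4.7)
My plan is to verify the three defining properties of a Whitney regular stratification for the collection $\{Y_\alpha := f^{-1}(X_\alpha)\}$ inside $f^{-1}(X)$: each $Y_\alpha$ is a smooth submanifold of $M$, the collection is locally finite, and it satisfies Whitney's condition (b). The first two are essentially formal: the classical preimage theorem, applied stratum by stratum using transversality of $f$ to $X_\alpha$, shows $Y_\alpha$ is a submanifold with $T_xY_\alpha = (T_xf)^{-1}(T_{f(x)}X_\alpha)$ for $x\in Y_\alpha$, while local finiteness of $\{Y_\alpha\}$ near a point $x\in f^{-1}(X)$ follows from local finiteness of $\{X_\alpha\}$ near $f(x)$ together with continuity of $f$.

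The real work is verifying condition (b). Fix strata $Y_\alpha, Y_\beta$ and sequences $x_n \in Y_\alpha$, $y_n \in Y_\beta$ both converging to some $x \in Y_\alpha$. Work in local coordinates at $x$ in $M$ and at $f(x)$ in $N$. Assume the secant lines $L_n$ through $x_n,y_n$ converge to a line $L \subseteq T_xM$ and that the tangent spaces $T_{y_n}Y_\beta$ converge to a subspace $E \subseteq T_xM$. After passing to a subsequence we may also assume that the tangent spaces $T_{f(y_n)}X_\beta$ converge to some $E' \subseteq T_{f(x)}N$, using compactness of the relevant Grassmannian. The key intermediate claim is
\[
  E \;=\; (T_xf)^{-1}(E').
\]
The inclusion $\subseteq$ follows by taking limits in the identity $T_{y_n}Y_\beta = (T_{y_n}f)^{-1}(T_{f(y_n)}X_\beta)$. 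For the reverse inclusion, Whitney's condition (a) applied to $X$ in $N$ gives $T_{f(x)}X_\alpha \subseteq E'$, so the transversality hypothesis $T_xf(T_xM) + T_{f(x)}X_\alpha = T_{f(x)}N$ upgrades to $T_xf(T_xM) + E' = T_{f(x)}N$; the dimension count based on this transversality then forces equality.

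To show $L \subseteq E$, split into two cases according to whether $T_xf(L)$ vanishes. If $T_xf(L) \neq 0$, the first-order expansion $f(y_n) - f(x_n) = T_xf(y_n - x_n) + o(|y_n - x_n|)$ shows that, after normalization, the secant lines $L_n'$ through $f(x_n)$ and $f(y_n)$ in $N$ are well-defined for large $n$ and converge to the line spanned by $T_xf(L)$. Applying Whitney's condition (b) for $X$ in $N$ to $\{f(x_n)\}, \{f(y_n)\}$ yields $T_xf(L) \subseteq E'$, hence $L \subseteq (T_xf)^{-1}(E') = E$. In the degenerate case $T_xf(L) = 0$, which also absorbs the possibility that $f(x_n) = f(y_n)$ along a subsequence, one has immediately $L \subseteq \ker T_xf \subseteq (T_xf)^{-1}(E') = E$.

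The main potential obstacle is the case split based on whether $T_xf$ collapses the limiting direction $L$; it is handled cleanly because $\ker T_xf$ is automatically contained in $(T_xf)^{-1}(E')$, so the degenerate case needs no further input. The technically pivotal step is the identification $E = (T_xf)^{-1}(E')$, since it is precisely what allows one to pull back Whitney's condition (b) from the stratification of $X$ in $N$ to the induced stratification of $f^{-1}(X)$ in $M$.
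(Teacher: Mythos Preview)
Your proof is correct. The paper itself does not prove this proposition; it simply attributes the result to Mather (Corollary~8.8 of \cite{mather}) and moves on. You instead give a self-contained direct verification: preimages of strata are submanifolds by the classical transversality theorem, local finiteness is inherited via continuity of $f$, and Whitney's condition~(b) is pulled back from $X$ using the identification $E=(T_xf)^{-1}(E')$ together with a case split on whether the limiting secant direction lies in $\ker T_xf$. The advantage of your route is that it makes the mechanism transparent---in particular the role of condition~(a) in forcing the dimension count that upgrades the easy inclusion $E\subseteq(T_xf)^{-1}(E')$ to an equality---whereas the paper's citation has only the virtue of brevity.
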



\section{Equivariant Non-Degeneracy}
Non-equivariantly, non-degeneracy of a map $f:M\to M$ is defined as transversality of the map $(\id_M, f):M\to M\times M$ to the diagonal submanifold $\Delta=\Delta(M)\subseteq M\times M$.
Therefore it seems reasonable to define a notion of equivariant non-degeneracy similarly in terms of equivariant transversality. There are some slight pitfalls in this approach, but
as we will show, the notion is sufficient to prove an equivariant Thom-Mather Theorem for equivariantly non-degenerate maps. An interesting open problem is the question whether 
$G$-hyperbolicity implies equivariant non-degeneracy, which is almost trivial in the case with trivial group action.

We start by defining equivariant transversality as in \cite{field} or \cite{bierstone}, where it is also shown that the following definitions are independent of all choices. As usual, the 
definition splits into the essential part of defining transversality to $0$ in a representation, and the generalization to manifolds. It is essential for the theory that for 
$G$-representations $V, W$, the $\calC^\infty_G(V, \setR)$-module $\calC^\infty_G(V, W)$ is finitely generated and generators can be chosen as polynomial maps. This follows from an
equivariant Stone-Weierstrass Theorem, compare \cite{field}, Lemma 6.6.1, and the references therein.

Before we give the definition, we establish the general setting. Let $V, W$ be $G$-representations and $f:V\to W$ a $G$-map. We choose finitely many polynomial generators $F_1,\dots, F_k$ 
for the $\calC^\infty_G(V, \setR)$-module $\calC^\infty_G(V, W)$. Hence, there is a decomposition $f=\sum_{i=1}^kf_i\cdot F_i$ for $G$-maps $f_i:V\to\setR$. We write
\[
 \vartheta:V\times\setR^k\to W,\;(v, t)\mapsto\sum_{i=1}^kt_i\cdot F_i(v),
\]
the universal polynomial, and
\[
 \Gamma_f:V\to V\times\setR^k,\;v\mapsto(v, f_1(v), \dots, f_k(v)),
\]
the graph map associated with $f$. We have $f=\vartheta\circ\Gamma_f$. We define the universal variety $\Sigma=\Sigma(V, W)$ as the set
\[
 \Sigma=\vartheta^{-1}(0).
\]
Thus, $f(v)=0$ if and only if $\Gamma_f(v)\in\Sigma$.

\begin{definition}
 With the notation as above, a map $f:V\to W$ is said to be equivariantly transverse to $0$ at $0$ if $\Gamma_f$ is transverse to the stratified set $\Sigma$ at $0$.
\end{definition}

Next, we carry over this definition to the case of maps between $G$-manifolds. For this again, we need some background construction. Let $M, N$ be $G$-manifolds and $f:M\to N$ a smooth
$G$-map. We choose a slice $S_x$ at $x$ for the $G$-action and a $G_x$-invariant neighbourhood $U$ of $f(x)$ such that $f(S_x)\subseteq U$. We can achieve that there is a 
$G_x$-diffeomorphism
\[
 \zeta:U\to T_{f(x)}N
\]
such that $\zeta(f(x))=0$. Furthermore, $T_{f(x)}N$ splits as $T_{f(x)}P\oplus T_{f(x)}P^\bot$ and we can assume that $\zeta$ maps $U\cap P$ diffeomorphically onto $T_{f(x)}P$. The slice 
$S_x$ is $G_x$-diffeomorphic to $V_x=T_xGx^\bot$ via a $G_x$-diffeomorphism $\varphi:S_x\to V_x$, satisfying $\varphi(x)=0$. Let $\pi_2:T_{f(x)}P\oplus T_{f(x)}P^\bot$ be the projection
to the second factor. Then the map $F=\pi_2\circ\zeta\circ f\circ\varphi:V_x\to T_{f(x)}P^\bot$ has the property that $F(v)=0$ if and only if $f(\varphi^{-1}(v))\in P$.

\begin{definition}
Let $M, N$ be smooth $G$-manifolds and $f:M\to N$ a smooth $G$-map, $P\subseteq N$ a smooth invariant submanifold. With the notation from above, we define $f$ to be equivariantly 
transverse (or $G$-transverse) to $P$ at $x$, if either $f(x)\notin P$, or else $F$ is $G_x$-transverse to $0$ at $0$.  
\end{definition}

\begin{remark}
It is easily seen that the notion of equivariant transversality is invariant under group translations. That is, if $f:M\to N$ is $G$-transverse to the submanifold $P\subseteq N$ at $x$, 
then it is $G$-transverse to $P$ at $g.x$. 
\end{remark}

The following is the fundamental genericity theorem for equivariant transversality, compare \cite{field}, Theorem 6.14.1 and \cite{bierstone}, Proposition 6.5.

\begin{theorem}\label{thm:equithommather}
 Let $M, N$ be $G$-manifolds, $P\subseteq N$ an invariant submanifold.
 \begin{enumerate}
  \item If $P$ is closed, the set of smooth $G$-maps $f:M\to N$ $G$-transverse to $P$ at all of $M$ is open in the smooth Whitney topology. If $M$ is compact, it is open in the 
  $\calC^\infty$-topology.

  \item The set of smooth $G$-maps $f:M\to N$ $G$-transverse to $P$ at all of $M$ is the intersection of countably many open and dense subsets. In particular, it is dense, since 
  $\calC^\infty(M, N)$ is a Baire space.

  \item If $M$ is compact, $P$ is closed and $H:M\times\incc{0,1}\to M$ is a smooth $G$-homotopy, then there is a continuous equivariant isotopy $k:M\times\incc{0,1}\to M$ such that
  $k_0=\id_M$ and $k_t^{-1}(H_t^{-1}(P))=H_0^{-1}(P)$.

  \item If $f:M\to N$ is $G$-transverse to the closed invariant submanifold $P\subseteq N$, then $f^{-1}(P)$ has a minimal Whitney stratification.
 \end{enumerate}
\end{theorem}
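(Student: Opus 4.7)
The plan is to reduce each of the four statements to the non-equivariant Thom--Mather Theorem \ref{thm:thommatherstrat} applied to the graph map $\Gamma_F$ of the local normal component $F:V_x\to T_{f(x)}P^\bot$ and to the canonical Whitney stratification of the universal variety $\Sigma=\Sigma(V_x, T_{f(x)}P^\bot)$. This is legitimate because, by the very definition of $G$-transversality given just before the theorem, $G$-transversality of $f$ to $P$ at $x$ is equivalent to transversality of the $G_x$-equivariant graph map $\Gamma_F$ to the Whitney stratified set $\Sigma$ at $0$; and because of the remark that $G$-transversality is invariant under group translations, it suffices to check this at one point of each orbit. The universal variety $\Sigma$ is semialgebraic (it is the zero set of the polynomial map $\vartheta$), so it possesses a minimal Whitney stratification by the proposition cited before Theorem \ref{thm:thommatherstrat}.

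For part (1), the map $f\mapsto\Gamma_f$ is continuous in the Whitney topology because the coefficients $f_i$ in the decomposition $f=\sum f_iF_i$ depend continuously on $f$. Openness of the set of maps transverse to $\Sigma$ is supplied by Theorem \ref{thm:thommatherstrat}(1). A locally finite cover of $M$ by tubular neighbourhoods of orbits and the associated slices gives local openness; the compact case is standard. For part (4), apply Proposition \ref{prop:transpreimage} to $\Gamma_F$ and $\Sigma$: the stratification of $\Gamma_F^{-1}(\Sigma)\cong F^{-1}(0)$ inherited from the minimal stratification of $\Sigma$ is Whitney regular, and $G_x$-invariance transports to $G$-invariance on $f^{-1}(P)$ via the slice construction, yielding a minimal invariant Whitney stratification.

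The main work goes into part (2). I would argue by Baire: fix a countable locally finite cover of $M$ by slice neighbourhoods together with relatively compact refinements, and prove that in each patch the maps whose local representative is $G_x$-transverse to $\Sigma$ form an open dense set. For density in the patch, exploit the polynomial generators $F_1,\ldots,F_k$: the finite-dimensional equivariant perturbation family
\[
 f_s(v)=f(v)+\sum_{i=1}^{k}s_i\cdot F_i(v),\qquad s\in\setR^k,
\]
has the property that its associated graph family $(v,s)\mapsto\Gamma_{f_s}(v)=(v,f_1(v)+s_1,\ldots,f_k(v)+s_k)$ is a submersion in the parameter direction and hence, as a map from $V_x\times\setR^k$, is automatically transverse to every stratum of $\Sigma$. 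Proposition \ref{prop:parastrat} then supplies a dense (in fact residual) set of parameters $s$ for which $\Gamma_{f_s}$ is transverse to $\Sigma$, i.e. $f_s$ is $G_x$-transverse to $0$. A $G_x$-invariant cut-off localises the perturbation and patches it to a global $G$-equivariant perturbation of $f$. The residual set over the countable cover is the required countable intersection of open dense sets.

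For part (3), the $G$-equivariant Thom first isotopy theorem is what is needed. By parts (1) and (4) applied to each $H_t$ (equivalently, applied to $H$ viewed as a $G$-map on $M\times\incc{0,1}$ with $\incc{0,1}$ acted on trivially), the preimage $H^{-1}(P)$ is a closed $G$-invariant Whitney stratified subset of $M\times\incc{0,1}$. Thom's first isotopy theorem applied to the projection $\pi:H^{-1}(P)\to\incc{0,1}$ produces a controlled stratified vector field lifting $\partial_t$; averaging this vector field over $G$ using the Haar measure preserves the controlledness conditions (because the stratification is $G$-invariant and $P$ is $G$-invariant) and yields a $G$-equivariant controlled lift. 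Its flow integrates to the desired equivariant isotopy $k$ with $k_t(H_0^{-1}(P))=H_t^{-1}(P)$.

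The principal obstacle is the density statement of part (2): everything rests on the existence of finitely many polynomial generators of $\calC^\infty_G(V,W)$ as a $\calC^\infty_G(V,\setR)$-module, which converts equivariant transversality into an ordinary transversality statement amenable to parametric perturbation, and on the verification that the equivariant averaging in part (3) does not destroy the control conditions of Thom's construction. Both rely on deep results quoted from \cite{field} and \cite{bierstone}, to which I would defer for the fully technical arguments.
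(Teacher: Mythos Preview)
The paper does not prove this theorem: it is quoted from the literature with the sentence ``compare \cite{field}, Theorem 6.14.1 and \cite{bierstone}, Proposition 6.5'' and no proof is given. Your sketch is a faithful outline of the argument found in those references, and in particular your density argument for part (2) via the parametric family $\Gamma(v,s)=(v,f_1(v)+s_1,\dots,f_k(v)+s_k)$ is exactly the mechanism the paper itself exploits later in the proof of Theorem \ref{thm:equinondegdense}.

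Two minor remarks. First, in part (3) the paper's statement omits the hypothesis that each $H_t$ is $G$-transverse to $P$ (compare the non-equivariant Theorem \ref{thm:thommatherstrat}(2), where this hypothesis is explicit); you have tacitly and correctly supplied it. Second, your claim in part (1) that $f\mapsto\Gamma_f$ is continuous because ``the coefficients $f_i$ depend continuously on $f$'' hides a genuine technical point: the decomposition $f=\sum f_iF_i$ is not unique, so one must invoke a smooth division theorem (Schwarz, Malgrange) to choose the $f_i$ continuously in $f$. This is handled in \cite{bierstone} and \cite{field}, to which you rightly defer.
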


We proceed to establish some elementary properties of equivariantly transverse maps. Since not all the results are to be found in the literature in the specific formulation we need,
we provide some of the proofs, where this is reasonable.

\begin{proposition}\label{prop:equiparatrans} Let $M, N, \Lambda$ be $G$-manifolds, $\Lambda$ with trivial $G$-action, $P\subseteq N$ an invariant submanifold. Let $F:\Lambda\times M\to N$
be a $G$-map $G$-transverse to $P$. Then the set of parameters $\lambda\in\Lambda$ such that $F_\lambda:M\to N$ is $G$-transverse to $P$ is residual in $\Lambda$. If $P$ in addition is 
compact, this set is open.
\end{proposition}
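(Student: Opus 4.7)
The plan is to reduce $G$-transversality at each point to transversality of an associated graph map to the universal variety $\Sigma$, and then to invoke the classical parametric transversality for Whitney stratified sets, Proposition \ref{prop:parastrat}. Since $G$ acts trivially on $\Lambda$, the isotropy of a point $(\lambda_0,x_0)\in\Lambda\times M$ is $G_{x_0}$, and a slice at $(\lambda_0,x_0)$ can be chosen as a product $\Lambda'\times S_{x_0}$ of a Euclidean neighborhood of $\lambda_0$ with a normal slice at $x_0$. Using a $G_{x_0}$-invariant tube around $F(\lambda_0,x_0)\in P$ and projecting onto $W=T_{F(\lambda_0,x_0)}P^\bot$, the discussion preceding the definition of $G$-transversality replaces $F$ locally by a $G_{x_0}$-map $\tilde F:\Lambda'\times V_{x_0}\to W$ whose $G_{x_0}$-transversality to $0$ at the origin encodes the $G$-transversality of $F$ to $P$ on the $G$-translated slice.

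Fixing polynomial generators $F_1,\dots,F_k$ of $\calC^\infty_{G_{x_0}}(V_{x_0},W)$ over $\calC^\infty_{G_{x_0}}(V_{x_0},\setR)$ and writing $\tilde F(\lambda,v)=\sum_i a_i(\lambda,v)F_i(v)$, the associated graph map
\[
\Gamma_{\tilde F}:\Lambda'\times V_{x_0}\to V_{x_0}\times\setR^k,\quad (\lambda,v)\mapsto (v,a_1(\lambda,v),\dots,a_k(\lambda,v))
\]
is transverse to the canonical Whitney stratification of $\Sigma$ by hypothesis, while $\Gamma_{\tilde F_\lambda}$ being transverse to $\Sigma$ at the origin is exactly $G_{x_0}$-transversality of $\tilde F_\lambda$ to $0$. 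Proposition \ref{prop:parastrat} applied to $\Gamma_{\tilde F}$ then yields a residual set of $\lambda\in\Lambda'$ for which $F_\lambda$ is $G$-transverse to $P$ throughout that slice. Since $M$ is second countable it admits a countable cover by such slice tubes; intersecting the countably many residual subsets of $\Lambda$ produced in this way, and using the Baire property of $\Lambda$, gives the desired residual set of parameters for which $F_\lambda$ is globally $G$-transverse to $P$.

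For the openness claim when $P$ is compact, I would combine openness of pointwise $G$-transversality to the closed submanifold $P$ (Theorem \ref{thm:equithommather}(1)) with a properness argument. The set
\[
B=\{(\lambda,x)\in\Lambda\times M\,:\,F(\lambda,x)\in P\text{ and }F_\lambda\text{ fails to be }G\text{-transverse to }P\text{ at }x\}
\]
is closed in $\Lambda\times M$ and contained in $F^{-1}(P)$; compactness of $P$ is then used to argue that the restriction of the projection $\Lambda\times M\to\Lambda$ to $F^{-1}(P)$ is proper, so the image of $B$ is closed and its complement is the desired open set of good parameters. The main obstacle I anticipate is the openness part, more precisely ensuring that the projection is genuinely proper without an a priori properness hypothesis on $F$; this is where one has to exploit compactness of $P$ most carefully, perhaps by localizing in $\Lambda$ and piecing the argument together over the countable slice cover used for the residual statement.
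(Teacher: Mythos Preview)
Your approach is workable but differs from the paper's. The paper argues globally: it uses Theorem~\ref{thm:equithommather}(iv) to get a minimal Whitney stratification of $F^{-1}(P)$, then shows by a direct linear-algebra computation in local coordinates that whenever $\lambda$ is a regular value of the projection $\pi:F^{-1}(P)\to\Lambda$ restricted to each stratum, the map $F_\lambda$ is $G$-transverse to $P$; Sard's theorem then delivers the residual set in one stroke. Your route instead localizes to slice charts, reduces to the classical parametric transversality of Proposition~\ref{prop:parastrat}, and then patches together countably many local residual sets. The global Sard argument avoids the bookkeeping of a countable cover of $\Lambda\times M$ (your cover is indexed by pairs $(\lambda_0,x_0)$, so extracting a single residual subset of $\Lambda$ requires a little more care than you indicate), and also avoids invoking Proposition~\ref{prop:parastrat}, whose proof in this paper is in fact deferred to the present proposition---so internally your reduction is circular, though of course the non-equivariant statement is classical and could be cited independently. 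Your claim that transversality of $\Gamma_{\tilde F_\lambda}$ to $\Sigma$ on all of $V_{x_0}$ yields $G$-transversality of $F_\lambda$ throughout the slice is correct but not immediate from the definition (the isotropy drops at generic points of the slice); it is exactly the content of Lemma~\ref{lemma:equifromstrat}, proved later.

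For openness the paper simply invokes Theorem~\ref{thm:equithommather}(i): if $P$ is compact (hence closed), the set of $G$-maps $M\to N$ that are $G$-transverse to $P$ is open, and $\lambda\mapsto F_\lambda$ is continuous. Your properness argument is more fragile, as you yourself note: compactness of $P$ alone does not make $F^{-1}(P)\to\Lambda$ proper without some hypothesis on $M$ or $F$, so that route would need an additional assumption or a localization step.
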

      
\begin{proof} Openness in case of compactness follows immediately from genericity of $G$-transverse maps and thus from Theorem \ref{thm:equithommather}. We proceed to show density, where 
we follow \cite{bierstone}. We know from Theorem \ref{thm:equithommather} (iv) that $F^{-1}(P)$ has a minimal Whitney stratification. Let $\pi:F^{-1}(P)\to\Lambda$ be the projection. We 
will show that $F_\lambda$ is $G$-transverse to $P$ if $\lambda$ is a regular value for the restrictions of $\pi$ to any stratum of $F^{-1}(P)$. Then Sards theorem takes care of the rest.
      
Clearly, if $(\lambda, x)\notin F^{-1}(P)$, then $F_\lambda$ is $G$-transverse to $P$ at $x$. So assume $F(\lambda, x)\in P$. Working locally, we can assume $M=V, N=W$ are 
$G$-representations, $\Lambda=\setR^\ell$, $F(\lambda, 0)=0$ and $F$ is $G$-transverse to $0$ at $(\lambda, 0)$. We have to check in which circumstances $F_\lambda$ is $G$-transverse to 
$0$ at $0$. Choose generators $F_1,\dots, F_k$ of $\calC^\infty_G(V, W)$ and write $F(\lambda, x)=\sum_{j=1}^kf_j(\lambda, x)\cdot F_j(x)$, which is possible since $G$ acts trivially on 
$\Lambda$. Let $\Sigma_\lambda$ be the stratum of $\Sigma_G(V, W)$ containing $(0, f(\lambda,0))$. Then $\Sigma=\Lambda\times\Sigma_\lambda$ is the stratum of 
$\Sigma_G(\Lambda\times V, W)$ containing $(\lambda, 0, f(\lambda,0))$. By assumption, the map $(\mu, x)\mapsto(\mu, x, f(\mu, x))$ is transverse to $\Sigma$ at $(\lambda, 0)$ and we have 
to check under which conditions $x\mapsto(x, f(\lambda, x))$ is transverse to $\Sigma_\lambda$ at $0$.
      
Assume $\lambda$ is a regular value for the restrictions of the projection $\pi:F^{-1}(0)\to\Lambda$ to the strata of $F^{-1}(0)$. Since 
$F^{-1}(0)=\Gamma_F^{-1}(\Sigma_G(\Lambda\times V,W))$, in particular the projection $p:\Gamma_F^{-1}(\Sigma)\to\Lambda$ has $\lambda$ as a regular value, so
\[
 T_{(\lambda,0)}(\Lambda\times V)=T_{(\lambda,0)}\Gamma_F^{-1}(\Sigma)+T_0V.
\]
By this and transversality of $\Gamma_F$ to $\Sigma$ at $(\lambda, 0)$, we have
\begin{eqnarray*}
T_{\Gamma_F(\lambda,\,0)}(\Lambda\times V\times\setR^k)&=&T_{(\lambda,0)}\Gamma_F(T_{(\lambda,0)}(\Lambda\times V))+T_{\Gamma_F(\lambda,\,0)}(\Sigma)\\
                                                       &=& T_0\Gamma_{F_\lambda}(T_0V)+T_{\Gamma_F(\lambda,0)}(\Sigma)\\
                                                       &=& T_0\Gamma_{F_\lambda}(T_0V)+T_\lambda\Lambda+T_{\Gamma_{F_\lambda}(0)}(\Sigma_\lambda).
\end{eqnarray*}
Since $T_{\Gamma_F(\lambda,\,0)}(\Lambda\times V\times\setR^k)=T_\lambda\Lambda+T_{\Gamma_{F_\lambda}(0)}(V\times\setR^k)$, we see that 
\[
 T_{\Gamma_{F_\lambda}(0)}(V\times\setR^k)=T_0\Gamma_{F_\lambda}(T_0V)+T_{\Gamma_{F_\lambda}(0)}\Sigma_\lambda
\]
which proves our claim.
\end{proof}

An important property of equivariant transversality is that we can draw conclusions on the various fixed point maps $f^H:M_{(H)}^H\to N^H$, considered as non-equivariant maps. More 
precisely it turns out that all these maps are transverse to the manifold $P^H$, provided $f$ is equivariantly transverse to $P$. This property is called stratumwise transversality of $f$.

\begin{proposition}\label{prop:stratumwisetrans}
 Let $M, N$ be $G$-manifolds, $P\subseteq N$ an invariant submanifold. Let $f:M\to N$ be a $G$-map $G$-transverse to $P$. Then for every isotropy subgroup $H$, the map
 \[
  f^H:M^H_{(H)}\to N^H
 \]
 induced by $f$ is non-equivariantly transverse to $P^H$.
\end{proposition}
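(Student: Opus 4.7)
The plan is to reduce the problem to the universal linear model associated with $G$-transversality, and then pass the transversality identity through orthogonal projection onto the $H$-fixed subspace.

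First I would reduce locally. Fix $x \in M^H_{(H)}$; I may assume $f(x) \in P^H$, since otherwise the claim is vacuous at $x$. Because $x \in M_{(H)}$ we have $G_x = H$, and I invoke the slice $S_x \cong V_x$ at $x$ together with the $G_x$-invariant chart $\zeta$ at $f(x)$ described before the definition of $G$-transversality. Under the slice identification, $M^H_{(H)}$ corresponds locally to $V_x^H$ (points in $V_x^H$ have isotropy exactly $H$) and $P^H$ corresponds to $T_{f(x)} P^H$. Setting $V := V_x$ and $W := T_{f(x)} P^\perp$, the problem becomes: given that $F := \pi_2 \circ \zeta \circ f \circ \varphi : V \to W$ is $H$-transverse to $0$ at $0$, show $F^H := F|_{V^H} : V^H \to W^H$ is transverse to $0$ at $0$. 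If $F(0) \neq 0$ there is nothing to check, so I assume $F(0) = 0$ and reduce to proving $DF^H(0) : V^H \to W^H$ is surjective.

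Next I would choose polynomial generators $F_1, \ldots, F_k$ of $\calC^\infty_H(V, W)$, write $F = \sum f_i F_i$, and set $t^* := (f_1(0), \ldots, f_k(0))$, so that $\Gamma_F(0) = (0, t^*)$. The hypothesis asserts
\[
D\Gamma_F(0)(V) + T_{(0, t^*)} \Sigma_\alpha = V \times \setR^k,
\]
where $\Sigma_\alpha$ is the stratum of $\Sigma \subseteq V \times \setR^k$ through $(0, t^*)$. Uniqueness of the minimal Whitney stratification of $\Sigma$ forces $\Sigma_\alpha$ to be $H$-invariant. Applying the orthogonal projection $\pi^H$ onto $(V \times \setR^k)^H = V^H \times \setR^k$ (note that $H$ acts trivially on $\setR^k$), and using the standard identities $\pi^H(U) = U^H$ for $H$-invariant subspaces $U$ (averaging), $A(V)^H = A(V^H)$ for $H$-linear maps $A$, and $T_p(X^H) = (T_p X)^H$ for $H$-invariant submanifolds $X$, I would obtain
\[
D\Gamma_F(0)(V^H) + T_{(0, t^*)} \bigl( \Sigma_\alpha \cap (V^H \times \setR^k) \bigr) = V^H \times \setR^k.
\]

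To conclude I would compose with $D\vartheta^H(0, t^*)$, where $\vartheta^H := \vartheta|_{V^H \times \setR^k} : V^H \times \setR^k \to W^H$. The tangent-space summand is annihilated because $\Sigma_\alpha \cap (V^H \times \setR^k) \subseteq (\vartheta^H)^{-1}(0)$, leaving $DF^H(0)(V^H) = D\vartheta^H(0, t^*)(V^H \times \setR^k)$. Finally, any constant map $v \mapsto w$ with $w \in W^H$ is $H$-equivariant and therefore expressible as $\sum a_i F_i$; evaluating at $0$ forces $\{F_i(0)\}$ to span $W^H$, so $D_t \vartheta^H(0, t^*)(s) = \sum s_i F_i(0)$ is surjective onto $W^H$, yielding $DF^H(0)(V^H) = W^H$. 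The delicate step is the projection: without $H$-invariance of the stratum $\Sigma_\alpha$, the projected tangent space would not correspond to the tangent space of an intrinsic subvariety of $V^H \times \setR^k$, and the argument would break down; the uniqueness of the minimal Whitney stratification of $\Sigma$ is what makes this invariance available.
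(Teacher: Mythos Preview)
Your argument is correct and makes the same initial reduction as the paper—both localize to a slice and reduce to showing that if $F:V\to W$ is $G_x$-transverse to $0$ at $0$ then $F^{G_x}:V^{G_x}\to W^{G_x}$ has surjective differential at $0$—but from there the two proofs genuinely diverge. The paper (following Bierstone) exploits the freedom in the definition of $G$-transversality to choose a \emph{special} generating set adapted to the splittings $V=V^{G_x}\oplus A$, $W=W^{G_x}\oplus B$: the first $\dim W^{G_x}$ generators are the constant maps onto a basis of $W^{G_x}$, the remaining ones come from generators of $\calC^\infty_{G_x}(A,B)$. With this choice the universal variety factors as $V^{G_x}\times\Sigma_{G_x}(A,B)\times\{0\}$, and the desired surjectivity is read off from the $\setR^k$-block of the transversality identity. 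You instead keep the generators arbitrary and push the transversality identity through the averaging projection onto $(V\times\setR^k)^{G_x}=V^{G_x}\times\setR^k$, using that the minimal Whitney stratification of $\Sigma$ is $G_x$-invariant so that $(T_{(0,t^*)}\Sigma_\alpha)^{G_x}=T_{(0,t^*)}(\Sigma_\alpha^{G_x})$; you then annihilate the stratum contribution by composing with $D\vartheta^{G_x}$, and your observation that constant maps into $W^{G_x}$ force $\{F_i(0)\}$ to span $W^{G_x}$ is precisely what the paper hard-wires by its choice of constant generators. Your route is more conceptual and basis-free, at the cost of invoking invariance of the canonical stratification; the paper's route is more hands-on and avoids that invariance statement entirely, at the cost of a particular bookkeeping of generators and coordinates.
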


\begin{proof} Again, we closely follow \cite{bierstone}. Working locally, we see that the set $M^H_{(H)}$ corresponds to the set $S_x^H$ in a normal slice at $x$. So 
$f^H:M^H_{(H)}\to N^H$ is transverse to $x\in P^H$ if and only if the map $\pi_2\circ\zeta\circ f\circ\varphi^{-1}:S_x^H\to\left(T_{f(x)}P^\bot\right)^H$ is transverse to $0$ at $0$. 
Consequently, the whole problem reduces to the following special case. Let $V, W$ be $G$-representations. If $f:V\to W$ is $G$-transverse to $0$ at $0$, then $f^G:V^G\to W^G$ is transverse
to $0$ at $0$. 
      
We write $V=V^G\oplus A$, $W=W^G\oplus B$ and define $F_i(v, a)=e_i$ for $i=1,\dots, \dim W^G=k$, where $e_i$ is a basis for $W^G$. Then we add generators $F_i(v, a)=\tilde{F}_i(a)$, 
$i=k+1,\dots, m$, where the $\tilde{F}$ are generators of $\calC^\infty_G(A, B)$. Then $F_1,\dots, F_m$ generates $\calC^\infty_G(V, W)$. For this special choice of generators, 
\[
 \Sigma_G(V, W)=\{(v, a, t)\in V^G\times A\times\setR^m\;|\; t=(s, 0)\in\setR^{m-k}\times\setR^k, (s, a)\in\Sigma_G(A, B)\}
\]
which we can identify with $V^G\times\Sigma_G(A, B)\times\{0\}\subseteq V^G\times(A\times\setR^{m-k})\times\setR^k$. The map $\pi\circ\Gamma_f\circ i$ is equal to $f^G$, where $i:V^G\to V$
is the inclusion, $\pi:V\times\setR^{m-k}\times\setR^k\to\setR^k$ the projection. Since $f$ is $G$-transverse to $0$ at $0$, we have
\[
 T_0\Gamma_f(V^G)+T_0\Gamma_f(A)+T_{\Gamma_f(0)}\Sigma_G(V, W)=V\times\setR^m.
\]
But by definition of the generators, $T_0\Gamma_f(A)$ is just $T_0\Gamma_{f\big|_A}(A)$ and since $f\big|_A$ is $G$-transverse to $0$ at $0$, the latter adds up with 
$T_{\Gamma_{f|_A}(0)}\Sigma_G(A, B)$ to $A\times\setR^{m-k}$. Furthermore,      
\[
 T_{\Gamma_f(0)}\Sigma_G(V, W)=V^G\times T_{\Gamma_{f|_A}(0)}\Sigma_G(A, B).
\]
Thus, $G$-transversality of $f$ implies that
\[
 T_0\Gamma_f(V^G)+V\times\setR^{m-k}=V\times\setR^m.
\]
Composition with $\pi$ gives
\[
 T_0f^G(V^G)+\{0\}=\setR^k=W^G.
\]
This is transversality of $f^G$ to $0$ at $0$.
\end{proof}

We proceed to define the notion of equivariant non-degeneracy. As pointed out above, this is basically just equivariant transversality of the map $(\id_M, f)$ to the diagonal submanifold, 
but we also add some extra generality by introducing parameters and dealing with maps out of submanifolds. 

\begin{definition}
 Let $M$ be a $G$-manifold, $P\subseteq M$ an invariant submanifold and $s$ a non-negative integer. Let $f:P\times\setR^s\to M$ be a $G$-map and $\pi_1:P\times\setR^s\to P$ be the projection
 onto the first factor. $f$ is called equivariantly non-degenerate at a point $(x, \lambda)\in P\times\setR^s$, if the map $(\pi_1,f):P\times\setR^s\to P\times M$ is equivariantly 
 transverse to the $P$-diagonal 
 \[
  \Delta=\Delta(P, M)=\{(x, x)\;|\;x\in P\}\subseteq P\times M
 \]
 at $(x, \lambda)$. 
\end{definition}

The main difficulty for the proof of an equivariant Thom-Mather Theorem for equivariant non-degeneracy is the density part. This comes from the fact that we would like to have a sequence
$(\id_M, f_n)$ of equivariantly transverse maps converging to $(\id_M, f)$. But the equivariant Thom-Mather Theorem \ref{thm:equithommather} only provides some sequence 
$(f^1_n, f^2_n)$ with that property, with $f^1_n$ not necessarily the identity. So we have to proceed differently.

The following lemma allows us to carry over genericity results from transversality to stratifications to equivariant transversality. It states that equivariant non-degeneracy is determined
not only pointwise but locally by the transversality property of a fixed graph map.

We need some preparations before the actual statement. Let $M$ be a compact $G$-manifold, $P\subseteq M$ an invariant compact submanifold and $s$ a non-negative integer. Let 
$f:P\times\setR^s\to M$ be a $G$-map and $f(x, \lambda)=x$, $G_x=H$. Let $S_x\subseteq P$ be a slice at $x$, $U\subseteq M$ be an $H$-neighbourhood of $x$ such that 
$f(S_x\times J)\subseteq U$ for a neighbourhood $J$ of $\lambda$. Let $\zeta:U\to T_xM$ be an $H$-diffeomorphism with $\zeta(x)=0$ and $\zeta(S_x)=T_xGx^\bot=V$. We write 
$\zeta\big|_{S_x}=\varphi$. By definition of equivariant non-degeneracy, $f$ is equivariantly non-degenerate at $(x, \lambda)$ if and only if the map
\[
 F=\pi\circ(\zeta\times\zeta)\circ(\pi_1, f)\circ(\varphi^{-1}, \id):V\times J\to T_{(x, x)}\Delta(M)^\bot
\]
is $H$-transverse to $0$ at $0$, where $\pi:T_xM\times T_xM\to T_{(x, x)}\Delta(M)^\bot=W$ is the projection. Let $F_1,\dots, F_k$ be finitely many polynomial generators for 
$\calC^\infty_H(V, W)$,
\[
 \vartheta:V\times\setR^k\to W,\;(v, t)\mapsto\sum_{i=1}^kt_i\cdot F_i(v)
\]
the associated universal polynomial,
\[
 \Gamma_F:V\times J\to V\times\setR^k,\;(v, \mu)\mapsto(v, f_1(v, \mu),\dots, f_k(v, \mu))
\]
the graph map of $F$.

\begin{lemma}\label{lemma:equifromstrat}
With the setup as above, $f$ is equivariantly non-degenerate at $(y, \mu)\in S_x\times J$ if and only if $\Gamma_F$ is transverse to the universal variety $\vartheta^{-1}(0)$ at 
$(\varphi(y), \mu)$.
\end{lemma}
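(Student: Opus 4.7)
The plan is to reduce the equivariant non-degeneracy condition at a general point $(y, \mu)$ to the same universal-variety transversality condition by transporting the local data chosen at $x$ (the slice $S_x$, the $H$-diffeomorphisms $\varphi, \zeta$, and the $H$-generators $F_1, \ldots, F_k$) to appropriate $K$-equivariant data at $y$, where $K = G_y \subseteq H$. First, observe that both sides are trivially true when $f(y, \mu) \neq y$: the right-hand side because $F(\varphi(y), \mu) \neq 0$, so $\Gamma_F(\varphi(y), \mu) \notin \vartheta^{-1}(0)$, making transversality vacuous; the left-hand side because $(\pi_1, f)(y, \mu) \notin \Delta$. So the interesting case is $f(y, \mu) = y$, which I would handle below.

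Next, I would use standard slice theory to extract a $K$-slice $S_y \subseteq S_x$ at $y$ in $P$, namely the preimage under $\varphi$ of a $K$-slice at $\varphi(y)$ in the representation $V$. The $H$-diffeomorphism $\zeta$, restricted to a sufficiently small $K$-neighborhood of $y$, identifies a $K$-neighborhood of $f(y, \mu) = y$ in $M$ with a $K$-neighborhood of $\zeta(y)$ in $T_xM$. Because $\zeta$ and the projection $\pi$ are linear on the relevant pieces, the local representative at $(y, \mu)$ whose $K$-transversality to $0$ defines equivariant non-degeneracy there agrees, up to a canonical $K$-equivariant change of coordinates, with the restriction of $F$ to $\varphi(S_y) \times J \subseteq V \times J$, with image projected into the $K$-invariant complement of the diagonal at $(\zeta(y), \zeta(y))$.

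The crux, and the main obstacle of the proof, is then the following structural statement: if $F: V \to W$ is an $H$-map between $H$-representations and $v \in V$ has $H$-isotropy $K$, then transversality of $\Gamma_F$ to $\Sigma_H(V, W) = \vartheta^{-1}(0)$ at $(v, F(v))$ is equivalent to $K$-transversality of the restricted map $F\big|_{V'}: V' \to W'$ to $0$ at $v$, where $V', W'$ are the $K$-slices at $v$, $F(v)$. This relies on three facts from the foundational treatment of equivariant transversality in \cite{bierstone} and \cite{field}: (i) $\Sigma_H(V, W)$ is Whitney stratified by $H$-orbit type of the $V$-component; (ii) the $H$-generators $F_1, \ldots, F_k$ restrict, after decomposing $W$ into $K$-isotypical summands, to a generating set for $\calC^\infty_K(V', W')$; and (iii) the local structure of $\Sigma_H$ near a point of $K$-isotropy is an equivariant product of the $H$-orbit direction with a copy of $\Sigma_K(V', W')$. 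Carefully verifying (iii), unwinding the $K$-isotypical decomposition and checking that the generator restriction in (ii) matches the two universal polynomials, is the main technical effort; once this is in place, combining with the identifications of the previous paragraph immediately yields the lemma, and the parameter $\mu \in J$ plays no essential role since $G$ acts trivially on $\setR^s$.
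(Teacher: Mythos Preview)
Your approach is correct but takes a more elaborate route than the paper. The paper bypasses the structural product decomposition you call (iii) by a direct translation trick: at $v=\varphi(y)$ with $K=G_y$, it simply takes as $K$-generators the translated maps $F'_i(z)=F_i(z+v)$ on the $K$-slice $Z=S_v-v$. With this choice, the associated universal variety is
\[
\Sigma'=\{(z,t)\in Z\times\setR^k\;|\;\textstyle\sum t_iF_i(z+v)=0\}=(\Sigma\cap(S_v\times\setR^k))-(v,0),
\]
a literal affine translate of a slice of the original $\Sigma$, and likewise $\Gamma_{F'}(z,\mu)=\Gamma_F(z+v,\mu)-(v,0)$. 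Hence transversality of $\Gamma_{F'}$ to $\Sigma'$ at $(0,\mu)$ is tautologically the same as transversality of $\Gamma_F|_{S_v}$ to $\Sigma\cap(S_v\times\setR^k)$ at $(v,\mu)$, and one finishes by $H$-translation invariance of stratified transversality to pass from the slice $S_v$ back to $V$.

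Both arguments ultimately rest on the same fact---your (ii)---that the (translated) $H$-generators serve as $K$-generators on the slice, which follows from the surjectivity of the restriction-to-slice map on equivariant germs. What the paper's approach buys is brevity: it never needs to identify $\Sigma_K(V',W')$ as a separate object or prove the local product structure (iii), because the choice of generators makes $\Sigma'$ visibly a piece of $\Sigma$ itself. Your approach, by contrast, makes the underlying geometry of $\Sigma_H$ near a sub-isotropy stratum explicit, which is conceptually illuminating and connects to the general machinery in \cite{bierstone} and \cite{field}, at the cost of having to verify (iii) rather than getting it for free. A minor slip: in your structural statement, $\Gamma_F$ lands in $V\times\setR^k$, so the relevant point is $\Gamma_F(v)=(v,f_1(v),\dots,f_k(v))$, not $(v,F(v))$; and your (i) should say the canonical stratification of $\Sigma_H$ is $H$-invariant (hence refines the orbit-type decomposition), not that it is given by orbit type.
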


\begin{proof}
 We only have to check the statement for points $(y, \mu)\in S_x\times J$ such that $f(y, \mu)=y$. Let $v=\varphi(y)$. A small ball $S_v\subseteq T_vHv^\bot\subseteq V$ around $v$ can 
 serve as a normal slice for the $H$-action on $V$ at $v$, and the image $S=\varphi^{-1}(S_v)$ is a normal slice for the $G$-action at $y$. Let $K=G_y$ and $Z=S_v-\{v\}$, a ball around $0$ 
 in a $K$-representation. We define
 \[
  \zeta':U\to T_yM,\;\zeta'(z)=\zeta(z)-v.
 \]
 Accordingly, $\varphi'$ is given as $\varphi'(z)=\varphi(z)-v$. Then equivariant non-degeneracy of $f$ at $(y, \mu)$ is equivalent to $K$-transversality of
 \[
  F'=\pi\circ(\zeta'\times\zeta')\circ(\pi_1, f)\circ(\varphi'^{-1}, \id):Z\times J\to W.
 \]
 By definition of $\zeta'$, $F'(z, \mu)=F(z+v, \mu)$. We need to choose generators for $\calC^\infty_K(Z, W)$ and we can take $F'_i(z)=F_i(z+v)$. The universal variety is then given as
\begin{eqnarray*}
 \Sigma'=\vartheta'^{-1}(0)&=&\{(z, t)\in Z\times\setR^k\;|\;\sum_{i=1}^kt_i\cdot F_i(z+v)=0\}\\
                           &=&\{(z, t)\in Z\times\setR^k\;|\;(z+v, t)\in\Sigma\}\\
                           &=&(\Sigma-\{(v, 0)\})\cap(Z\times\setR^k)\\
                           &=&(\Sigma\cap(S_v\times\setR^k))-\{(v,0)\}
\end{eqnarray*}
Furthermore,
\[
 F'(z, \mu)=F(z+v, \mu)=\sum_{i=1}^kf_i(z+v, \mu)\cdot F_i(z+v)=\sum_{i=1}^kf_i(z+v, \mu)\cdot F'_i(z),
\]
and hence,
\[
 \Gamma_{F'}(z)=(z, f_1(z+v, \mu), \dots, f_k(z+v, \mu))=\Gamma_F(z+v)-(v, 0).
\]
By definition, $f$ is equivariantly non-degenerate at $(y, \mu)$ if and only if $\Gamma_{F'}$ is transverse to $\Sigma'$ at $(0, \mu)$, which is equivalent by our calculations to 
$\Gamma_F\big|_{S_v}$ being transverse to $\Sigma\cap(S_v\times\setR^k)$ at $(v, \mu)$. But $S_v$ is a slice for the $H$-action, hence by translation invariance of equivariant 
transversality, $\Gamma_F\big|_{S_v}$ is transverse to $\Sigma\cap(S_v\times\setR^k)$ at $(v, \mu)$ if and only if $\Gamma_F$ is transverse to $\Sigma$ at $(v, \mu)$. This proves our 
claim.
\end{proof}

\begin{theorem}\label{thm:equinondegdense}
 Let $M$ be a compact $G$-manifold and $P\subseteq M$ a compact invariant submanifold. Then the space of $G$-maps $P\times\setR^s\to M$ that are equivariantly non-degenerate in a compact 
 subset $K\subseteq P\times\setR^s$ is open and dense.
\end{theorem}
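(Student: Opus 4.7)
My plan is to split the statement into openness and density, with density being the real difficulty as hinted just before the statement. For openness I would argue briefly: the map $f \mapsto (\pi_1, f)$ is continuous $\calC^\infty_G(P\times\setR^s, M) \to \calC^\infty_G(P\times\setR^s, P\times M)$, and Theorem \ref{thm:equithommather}(i) applied to the closed invariant submanifold $\Delta \subseteq P\times M$ gives openness of the set of $G$-maps $G$-transverse to $\Delta$ on the compact set $K$ in the target space (a standard localisation argument handles the non-compactness of $P\times\setR^s$). Pulling back under $f \mapsto (\pi_1, f)$ then yields openness in our space.

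For density, the obvious approach of applying Theorem \ref{thm:equithommather} directly to $(\pi_1, f)$ fails because the resulting perturbation need not have identity first component, as the paper already emphasises. The way around this is Lemma \ref{lemma:equifromstrat}: equivariant non-degeneracy in a slice-neighbourhood is equivalent to transversality of a graph map $\Gamma_F$, whose first component is fixed as the identity, to the universal variety $\Sigma$. This is transversality to a stratification, so the classical Thom--Mather Theorem \ref{thm:thommatherstrat}---not its equivariant refinement---is what I intend to use.

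Concretely, I would cover $K$ by finitely many invariant slice-neighbourhoods $\calU_1, \ldots, \calU_N$ of the form $G\times_{H_j}V_j$, each small enough that Lemma \ref{lemma:equifromstrat} applies with universal variety $\Sigma_j$ and coefficient functions $f_1^j, \ldots, f_{k_j}^j$ coming from the expansion in polynomial generators of $\calC^\infty_{H_j}(V_j, W_j)$. Perturbing $f$ inside $\calU_j$ amounts to perturbing the $H_j$-invariant coefficients $f_i^j$; since only the $\setR^{k_j}$-valued component of $\Gamma_{F_j}$ is touched and we may use any $H_j$-invariant smooth correction, the classical parameterised transversality statement (Theorem \ref{thm:thommatherstrat} combined with Proposition \ref{prop:parastrat}) yields a dense family of such corrections achieving transversality to $\Sigma_j$ on the compact portion of the chart that meets $K$. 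An equivariant cutoff localises the perturbation and preserves $G$-equivariance. A standard inductive argument over the finite cover, exploiting the openness already established to keep the perturbations at each stage small enough not to destroy progress from earlier stages, then produces an equivariantly non-degenerate map on $K$ arbitrarily close to $f$.

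The main obstacle is the local density step inside a chart: perturbation is only permitted in the coefficient functions $f_i^j$ and cannot touch the fixed generators $F_i^j$, and one has to verify that this restricted class of perturbations is still rich enough to realise transversality of $\Gamma_{F_j}$ to the whole stratification of $\Sigma_j$. Lemma \ref{lemma:equifromstrat} is exactly what makes this tractable, by cleanly separating the fixed $V_j$-valued and the perturbable $\setR^{k_j}$-valued parts of the graph map; the remaining work is essentially the same parameterised-transversality argument that appears in the proof of Proposition \ref{prop:equiparatrans}, applied in the purely stratified rather than equivariant setting.
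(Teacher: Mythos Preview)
Your proposal is correct and follows essentially the same strategy as the paper: openness via Theorem \ref{thm:equithommather}, and density by working in finitely many slice charts, perturbing only the coefficient functions $f_i$ of the graph map, invoking Lemma \ref{lemma:equifromstrat} to translate transversality of $\Gamma_F$ to $\Sigma$ back into equivariant non-degeneracy, and patching inductively using openness. The paper makes the local perturbation completely explicit by taking the constant shifts $f_i \mapsto f_i + b_i$, noting that the resulting family $\Gamma(v,\lambda,b)=(v, f_i(v,\lambda)+b_i)$ is a submersion and hence transverse to $\Sigma$ everywhere, and then applying the parametric argument (it cites Proposition \ref{prop:equiparatrans}, but as you correctly observe the content used is really the stratified Proposition \ref{prop:parastrat}) to extract a good $b$; your identification of this as the ``main obstacle'' and its resolution match the paper's treatment.
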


\begin{proof}
 Openness is immediate from Theorem \ref{thm:equithommather}. For density, let $f:P\times\setR^s\to M$ be any $G$-map and $(x, \lambda)$ be a fixed point of $f$ in $K$, i.e. 
 $f(x, \lambda)=x$. We choose a slice $S_x\subseteq P$ at $x$ and a $G_x$-diffeomorphism $\varphi:S_x\to V=T_xGx^\bot$ such that $\varphi(x)=0$. Furthermore, we choose a 
 $G_x$-neighbourhood $U$ of $x$, a $G_x$-diffeomorphism $\zeta:U\to T_xM$ and a neighbourhood $J\subseteq\setR^s$ of $\lambda$ such that $\zeta(x)=0$ and $f(S_x, J)\subseteq U$. We can 
 choose $\varphi$ and $\zeta$ in way such that $\zeta\circ\varphi^{-1}:V\to T_xM$ is the inclusion, and we do so.

 Let $\pi:T_xM\times T_xM\to T_xM,\;(v, w)\mapsto v-w$. $\pi$ can be interpreted as the projection onto the orthogonal complement of the diagonal subspace of $T_xM\times T_xM$. In particular,
 $G$-transversality of $(\id_M, f)$ is defined as $G_x$-transversality of the map
\[
 F:V\times J\to T_xM
\]
\[
(v, \lambda)\mapsto\pi\circ\zeta\times\zeta(\varphi^{-1}(v), f(\varphi^{-1}(v), \lambda))=\pi(v, \zeta\circ f(\varphi^{-1}(v), \lambda))=v-\zeta\circ f(\varphi^{-1}(v), \lambda))
\]
to $0$ at $0$. Let $F_1,\dots, F_k$ be a finite set of polynomial generators for $\calC^\infty_H(V, T_xM)$. Writing
\[
 F(v, \lambda)=\sum_{i=1}^kf_i(v, \lambda)\cdot F_i(v),
\]
we have the graph map
\[
 \Gamma_F:V\times J\to V\times\setR^k,\;(v, \lambda)\mapsto(v, f_1(v, \lambda), \dots, f_k(v, \lambda)),
\]
and transversality of $\Gamma_F$ to the universal variety $\Sigma$ at $0$ is equivalent to $G$-trans\-ver\-sa\-li\-ty of $(\id_M, f)$ to $\Delta$ at $(x, \lambda)$. Define a map
\[
 \Gamma:V\times J\times\setR^k\to V\times\setR^k,\;(v, \lambda, b)\mapsto(v, f_1(v, \lambda)+b_1,\dots, f_k(v, \lambda)+b_k).
\]
Then $\Gamma$ is transverse to the universal variety at all of $V\times J\times\setR^k$. By Proposition \ref{prop:equiparatrans}, there are arbitrarily small parameters $b$ such that 
$\Gamma^b=\Gamma(\cdot, \cdot, b)$ is transverse to $\Sigma$ at all of $V\times J$. For any parameter $b$, we obtain a $G_x$-map $F^b:V\times J\to T_xM$ via
\[
 F^b(v, \lambda)=\vartheta\circ\Gamma^b(v, \lambda)
\]
and this in turn determines a $G_x$-map 
\[
f^b:S_x\times J\to U, (y, \mu)\mapsto\zeta^{-1}(\varphi(y)-F^b(\varphi(y), \mu)).
\]
By choosing $b$ sufficiently small, this map $f^b$ is arbitrarily close to $f\big|_{S_x}$. By Lemma \ref{lemma:equifromstrat}, the unique $G$-map $G.S_x\times J\to M$ resulting from $f^b$
is equivariantly non-degenerate, since its graph map is $\Gamma^b$.

Now assume we are given a neighbourhood $\calU$ of $f$. For every orbit $Gx$ of fixed points of $f$, we choose a slice $S_x$ and neighbourhoods $J_x, U_x$ as above. The sets 
$G.S_x\times J_x$ cover $\fix(f)$ and by compactness, we find finitely many, centered around points $(x_1, \lambda_1),\dots, (x_m, \lambda_m)\in\fix(f)$. Denoting the corresponding sets by 
$S_i$, $J_i$, $U_i$ for $i=1,\dots, m$, we then replace $f$ in $S_1\times J_1$ by a map $f^b$, constructed in the way sketched above, and extend uniquely to $G.S_1\times J_1$ by 
equivariance.

Clearly, this replacement can be smoothed to a $G$-map $f_1\in\calU$ such that $f_1$ is equivariantly non-degenerate in a neighbourhood of $G.S_1\times J_1$ by choosing $b$ sufficiently 
small. Now assume we have a map $f_k\in\calU$ which is equivariantly non-degenerate in the closure of $\bigcup_{i=1}^k(G.S_i\times J_i)$. By openness of equivariant non-degeneracy, we can 
then alter $f_k$ in $S_{k+1}\times J_{k+1}$ to a map $f_k^b$ such that the resulting map remains equivariantly non-degenerate in $\bigcup_{i=1}^k(G.S_i\times J_i)$. Inductively, we 
obtain a map $f'\in\calU$ which is equivariantly non-degenerate in a neighbourhood of the set of fixed points of $f$. But given any neighbourhood $U$ of $\fix(f)$, there is a neighbourhood 
$\calU'$ of $f$ such that no element of $\calU'$ has a fixed point outside of $U$. Hence, the whole construction can be carried out in a way such that $f'$ has no fixed points outside of 
$\bigcup_{i=1}^m(G.S_i\times J_i)$ and thus is equivariantly non-degenerate in $K$.  
\end{proof}

We can now transfer the properties of equivariant transversality to equivariant non-degeneracy.

\begin{proposition}\label{prop:equiparatransnondeg}
 Let $M$ be a compact $G$-manifold, $P\subseteq M$ a compact invariant submanifold and $f:P\times\setR^s\to M$ a $G$-map that is equivariantly non-degenerate in $P\times I$ for some 
 subset $I\subseteq\setR^s$. Then the set of parameters $\lambda\in I$ such that $f_\lambda:P\to M$ is equivariantly non-degenerate is open and dense.
\end{proposition}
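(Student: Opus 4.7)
The plan is to reduce the statement directly to the parametrized equivariant transversality result, Proposition \ref{prop:equiparatrans}, by unpacking the definition of equivariant non-degeneracy. Recall that, by definition, $f$ is equivariantly non-degenerate at $(x,\lambda)$ if and only if the graph map $(\pi_1,f): P\times\setR^s\to P\times M$ is $G$-transverse at $(x,\lambda)$ to the diagonal $\Delta=\Delta(P,M)\subseteq P\times M$. Since $P$ is compact, $\Delta$ is a compact (hence closed) $G$-invariant submanifold, so Proposition \ref{prop:equiparatrans} applies in full strength.

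For openness, I would argue as follows. Theorem \ref{thm:equithommather}(i) implies that the set of $G$-maps $P\to P\times M$ which are $G$-transverse to $\Delta$ is open in $\calC^\infty_G(P,P\times M)$. The evaluation $\setR^s\to\calC^\infty_G(P,P\times M)$, $\lambda\mapsto(\id_P,f_\lambda)$, is continuous in the Whitney/$\calC^\infty$ topology, and the preimage of an open set is open; restricting to $I$ gives openness of the parameter set in $I$.

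Density is the substantive step. For a fixed $\lambda_0\in I$, the assumption says $(\pi_1,f)$ is $G$-transverse to $\Delta$ at every point of $P\times\{\lambda_0\}$; since openness of $G$-transversality and compactness of $P$ allow us to thicken, there is an open neighborhood $J\subseteq\setR^s$ of $\lambda_0$ such that $(\pi_1,f)$ is $G$-transverse to $\Delta$ on all of $P\times J$. On $P\times J$ the hypothesis of Proposition \ref{prop:equiparatrans} is satisfied (with trivial $G$-action on $J$), and since $\Delta$ is compact the proposition yields an open dense set of $\mu\in J$ for which $(\pi_1,f)_\mu=(\id_P,f_\mu)$ is $G$-transverse to $\Delta$, i.e.\ $f_\mu$ is equivariantly non-degenerate. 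Because this argument is carried out in an arbitrary neighborhood $J$ of an arbitrary point $\lambda_0\in I$, the resulting parameter set is dense in $I$ with respect to the subspace topology (and open by the previous paragraph).

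The main obstacle is really bookkeeping rather than a new idea: one must verify that the localization to an open thickening $P\times J$ of $P\times\{\lambda_0\}$ is legitimate, so that Proposition \ref{prop:equiparatrans} can be invoked even though $f$ is only assumed to be equivariantly non-degenerate on $P\times I$ rather than on all of $P\times\setR^s$. Once compactness of $P$ is used to produce such a $J$, the remainder is an immediate unwinding of definitions and an appeal to the already-established parametrized transversality theorem.
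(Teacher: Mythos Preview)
Your argument is correct and is precisely the paper's approach spelled out: the paper's entire proof is the single sentence ``This follows from Proposition~\ref{prop:equiparatrans}, applied to the diagonal submanifold,'' and your openness and density paragraphs are exactly what that sentence means once one unwinds the definition of equivariant non-degeneracy and notes that $\Delta(P,M)$ is a compact invariant submanifold of $P\times M$. Your local thickening to an open $J\ni\lambda_0$ is a legitimate (if slightly roundabout) way to place yourself in the hypotheses of Proposition~\ref{prop:equiparatrans}, and it addresses a point the paper's one-liner leaves implicit.
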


\begin{proof}
 This follows from Proposition \ref{prop:equiparatrans}, applied to the diagonal submanifold.
\end{proof}

\begin{proposition}\label{prop:equidimnondeg}
 Let $M$ be a $G$-manifold, $P$ an invariant submanifold, $s$ a non-negative integer and $f:P\times\setR^s\to M$ an equivariantly non-degenerate $G$-map. Then the set of fixed points of 
 $f$ of isotropy $H$ is either empty, or a submanifold of $P^H_{(H)}\times\setR^s$ of dimension $s-\dim M^H+\dim P^H$. If this value is less than zero, it follows in particular that $f$ 
 has no fixed points with isotropy $H$.
\end{proposition}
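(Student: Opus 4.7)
The plan is to reduce the statement to ordinary (non-equivariant) transversality via Proposition \ref{prop:stratumwisetrans} and then to perform a dimension count. By definition, equivariant non-degeneracy of $f$ asserts that the graph map $(\pi_1, f) : P \times \setR^s \to P \times M$ is $G$-transverse to the invariant diagonal submanifold $\Delta = \{(x,x) : x \in P\} \subseteq P \times M$. The first step will be to apply Proposition \ref{prop:stratumwisetrans} to this map with target $P \times M$ and invariant submanifold $\Delta$, which yields for each isotropy type $(H)$ that the induced fixed-set map
\[
(\pi_1, f)^H : (P\times\setR^s)^H_{(H)} \to (P\times M)^H
\]
is non-equivariantly transverse to $\Delta^H$.

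I would then identify all the pieces involved. Since $G$ acts trivially on the parameter factor $\setR^s$, the isotropy of $(x,\lambda)$ equals $G_x$, so $(P\times\setR^s)^H_{(H)} = P^H_{(H)} \times \setR^s$, an open subset of dimension $\dim P^H + s$ in $P^H \times \setR^s$. The target splits as $(P\times M)^H = P^H \times M^H$, and $\Delta^H$ is the image of the diagonal inclusion $P^H \hookrightarrow P^H \times M^H$, a submanifold of dimension $\dim P^H$ and hence of codimension $\dim M^H$.

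Next, observe that the set of fixed points of $f$ of isotropy exactly $H$ is precisely the preimage $((\pi_1, f)^H)^{-1}(\Delta^H)$. By the transversality established in the first step, this preimage is either empty or a submanifold of $P^H_{(H)} \times \setR^s$ of codimension equal to $\dim M^H$. Its dimension is therefore $\dim P^H + s - \dim M^H$, as required; and of course this preimage must be empty whenever that number is negative, which is the final assertion of the proposition.

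I anticipate no substantial obstacle once Proposition \ref{prop:stratumwisetrans} is invoked: the argument is purely a matter of unwinding definitions and performing a codimension count. The only care needed is in the identification of the $H$-fixed sets when the parameter factor carries the trivial $G$-action, so that $P^H_{(H)} \times \setR^s$ is correctly recognized as the open piece on which $(\pi_1,f)^H$ lives, and in recalling the standard fact that $P^H_{(H)}$ is open in the manifold $P^H$.
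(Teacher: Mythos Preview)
Your proposal is correct and follows essentially the same approach as the paper's own proof: both invoke Proposition~\ref{prop:stratumwisetrans} to pass from equivariant transversality of $(\pi_1,f)$ to ordinary transversality of the induced map on $H$-fixed strata, and then apply the standard preimage dimension formula together with the observation that $\dim P^H_{(H)}=\dim P^H$.
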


\begin{proof}
 Clearly we can assume that $P^H_{(H)}$ is non-empty, otherwise, there are no fixed points of isotropy $H$. Since the map $(\id_M, f)$ is equivariantly transverse to the diagonal, it is 
 stratumwise transverse to the diagonal by Proposition \ref{prop:stratumwisetrans}. Hence, the preimages $(\id_M,f^H)^{-1}(\Delta(P^H, M^H))$ are either empty, or submanifolds of 
 $P^H_{(H)}\times\setR^s$ of dimension 
\[
 \dim P^{H}_{(H)}+s-\dim P^H-\dim M^H+\dim P^H=s-\dim M^H+\dim P^H.
\]
 The last equality follows since $H$ is an actual isotropy in $P$ and therefore, $(H)$ is the principal orbit type in $P^H_{(H)}$, implying that $\dim P^H_{(H)}=\dim P^H$.
\end{proof}

\begin{remark}
 Looking closer at the cases $s=0$ and $s=1$, we remark the following. The set of fixed points of a $G$-map $f:M\to M$ is generically a finite union of $G$-orbits. If $G$ is infinite, only 
 $G$-orbits of type $(H)$ with $\dim W(H)=0$ participate.

 For $s=1$, the set of fixed points is generically a $1$-dimensional submanifold consisting of two parts. A finite union of $G$-orbits of type $(H)$ with $\dim W(H)=1$ at finitely many 
 parameters, and a 1-dimensional manifold consisting of finite numbers of $G$-orbits of type $(H)$ with $\dim W(H)=0$, parametrized smoothly by the $\setR$ component.
\end{remark}

\begin{corollary}\label{cor:equicodim}
 Let $M$ be a $G$-manifold and $P\subseteq M$ a compact invariant submanifold such that
\[
 \dim P^H_{(H)}+s<\dim M^H
\]
for an orbit type $(H)$ of $P$. Then the set of $G$-maps $M\times\setR^s\to M$ that have no fixed points of type $(H)$ in $P\times\setR^s$ is open and dense.
\end{corollary}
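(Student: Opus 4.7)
The plan is to combine Theorem \ref{thm:equinondegdense} with Proposition \ref{prop:equidimnondeg}: the latter shows that for an equivariantly non-degenerate map the fixed-point set of isotropy $H$ has expected dimension $s-\dim M^H+\dim P^H$. Since $(H)$ is the principal orbit type in $P^H_{(H)}$, we have $\dim P^H_{(H)}=\dim P^H$, so our hypothesis $\dim P^H_{(H)}+s<\dim M^H$ forces this expected dimension to be negative, meaning the set is empty. Density of equivariantly non-degenerate maps then yields density of the set in the corollary.

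For the density statement, I would proceed as follows. Given a $G$-map $f:M\times\setR^s\to M$ and a Whitney neighborhood $\calU$ of $f$, first restrict to $f|_{P\times\setR^s}:P\times\setR^s\to M$. To handle the non-compactness of $\setR^s$, exhaust by closed balls $B_n\subseteq\setR^s$ and apply Theorem \ref{thm:equinondegdense} to the compact $G$-invariant sets $P\times B_n$ inductively, in a manner parallel to the slice-by-slice extension at the end of the proof of Theorem \ref{thm:equinondegdense}. This produces an equivariantly non-degenerate approximation $\tilde f:P\times\setR^s\to M$ arbitrarily close to $f|_{P\times\setR^s}$ in the Whitney topology. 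Next, extend $\tilde f$ to a $G$-map $g:M\times\setR^s\to M$ using a $G$-invariant tubular neighborhood of $P$ in $M$ together with a $G$-invariant smooth bump function supported in that neighborhood, interpolating between $\tilde f$ on $P$ and $f$ away from a smaller neighborhood of $P$. Choosing the approximation small enough ensures $g\in\calU$ and $g|_{P\times\setR^s}=\tilde f$. Proposition \ref{prop:equidimnondeg} applied to $\tilde f$, together with the dimension inequality, implies that $g$ has no fixed points of type $(H)$ in $P\times\setR^s$.

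For openness, I would combine the openness of equivariant non-degeneracy on compact $G$-sets (Theorem \ref{thm:equithommather}(i)), applied along the exhaustion $P\times B_n$, with compactness of $P$ and the fact that perturbations in the Whitney topology vanish at infinity. More concretely, if $f$ has no type-$(H)$ fixed points in $P\times\setR^s$, then for any $n$ the same dimension obstruction applies to equivariantly non-degenerate maps nearby on $P\times B_n$, while Whitney-smallness outside ensures no type-$(H)$ fixed points can appear at infinity under small perturbations.

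The main obstacle is the extension step: producing a $G$-map $g:M\times\setR^s\to M$ close to $f$ in the Whitney topology that agrees with the equivariantly non-degenerate approximation $\tilde f$ on $P\times\setR^s$. This requires invoking the equivariant tubular neighborhood theorem for the invariant submanifold $P\subseteq M$ and interpolating equivariantly, while simultaneously managing the exhaustion of $\setR^s$ by compact sets. Both issues are technical but standard in the equivariant setting and mirror the inductive slice-by-slice construction already carried out in the proof of Theorem \ref{thm:equinondegdense}.
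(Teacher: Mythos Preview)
Your density argument is essentially the paper's approach; the paper's own proof is a single sentence invoking Proposition~\ref{prop:equidimnondeg} and Theorem~\ref{thm:equinondegdense}. The extension from $P\times\setR^s$ to $M\times\setR^s$ and the exhaustion of $\setR^s$ are technical points the paper glosses over. In the only application (Corollary~\ref{cor:equicodimdisc}) the parameter space is a compact interval, so the exhaustion is unnecessary there; the extension can be handled by the same local slice-by-slice perturbation already used in the proof of Theorem~\ref{thm:equinondegdense}, cut off by an invariant bump function in a tubular neighbourhood of $P$.

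Your openness argument, however, has a genuine gap. You write that ``the same dimension obstruction applies to equivariantly non-degenerate maps nearby on $P\times B_n$'', but the map $f$ you start from is an arbitrary map with no type-$(H)$ fixed points; it need not be equivariantly non-degenerate, and neither need its neighbours. The dimension obstruction of Proposition~\ref{prop:equidimnondeg} is only available for non-degenerate maps, so it says nothing about a general small perturbation of $f$. In fact the openness assertion is delicate for a single orbit type: a fixed point of strictly larger isotropy $(K)>(H)$ lying in $\overline{P_{(H)}}$ can spawn type-$(H)$ fixed points under arbitrarily small perturbation. Concretely, take $G=\setZ_2$ acting on $M=\setR^3$ by sign change on the first coordinate, $P$ a compact $G$-invariant disc in $\setR^2\times\{0\}$, $s=0$, $(H)=(e)$; the hypothesis $\dim P^e_{(e)}+0=2<3=\dim M^e$ holds, the map $f(x,y,z)=(x,y,z+x^2)$ has only type-$(G)$ fixed points in $P$, yet $f_\eps(x,y,z)=(x,y,z+x^2-\eps)$ has type-$(e)$ fixed points in $P$ for every small $\eps>0$. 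What rescues the paper's actual use of this corollary is that in Corollary~\ref{cor:equicodimdisc} the dimension inequality is verified for \emph{every} orbit type of $P$, and the conclusion drawn there is ``no fixed points in $P$ at all'', which is a manifestly open condition on compact $P$ with compact parameter interval. So for the purpose at hand you should either prove openness only under that stronger hypothesis, or be content with density (which already contains an open dense subset, namely the non-degenerate maps).
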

\begin{proof}
 This is an immediate consequence of Proposition \ref{prop:equidimnondeg}, together with the Density Theorem \ref{thm:equinondegdense}.
\end{proof}

\begin{corollary}\label{cor:equicodimdisc}
 Let $P$ be the finite disjoint union of boundaries of equivariant discs. Then the set of equivariant homotopies $M\times I\to M$ without fixed points in $P$
 is open and dense.
\end{corollary}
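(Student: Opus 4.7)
The plan is to apply Corollary \ref{cor:equicodim} with parameter dimension $s=1$ to each orbit type $(K)$ appearing in $P$, and then intersect the resulting open and dense sets. Since the compact $G$-manifold $M$ carries only finitely many orbit types, this intersection stays open and dense, and any homotopy inside it has no fixed point of any orbit type in $P$, hence no fixed point in $P$ at all.

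The only hypothesis of Corollary \ref{cor:equicodim} that needs verification is the dimension inequality $\dim P^K_{(K)} + 1 < \dim M^K$ for every orbit type $(K)$ occurring in $P$. I would verify it by a slice computation. A typical component of $P$ has the form $G \times_H S(L^\bot)$ sitting inside a tubular neighbourhood $G \times_H V$ of a $G$-orbit of isotropy $(H)$, where $L \subseteq V^H$ is the distinguished one-dimensional subspace of Definition \ref{defi:equidisc} and $S(L^\bot)$ is the unit sphere in its invariant orthogonal complement. Any orbit type in this component satisfies $K \subseteq H$ up to conjugation, so I pick a point $[e,v] \in P$ with $G_{[e,v]} = K$; then $v \in S(L^\bot)^K$.

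Decomposing tangent spaces via the slice gives
\[
 T_{[e,v]}M = \mathfrak{g}/\mathfrak{h} \oplus V, \qquad T_{[e,v]}P = \mathfrak{g}/\mathfrak{h} \oplus v^\bot,
\]
where $v^\bot$ is the orthogonal complement of $\setR v$ inside $L^\bot$. Since $L \subseteq V^H \subseteq V^K$, one has $V^K = L \oplus (V^K \cap L^\bot)$, hence $\dim(V^K \cap L^\bot) = \dim V^K - 1$; dropping the further $K$-fixed direction $\setR v$ yields $\dim(v^\bot)^K = \dim V^K - 2$. Taking $K$-fixed parts of the two decompositions above therefore gives $\dim P^K = \dim M^K - 2$ at $[e,v]$, and hence along the entire $K$-stratum. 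In particular $\dim P^K_{(K)} \leq \dim P^K \leq \dim M^K - 2 < \dim M^K - 1$, which is the required inequality.

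The only slightly delicate point is this dimension count, and within it the key observation is that the defining line $L$ sits in $V^H$ and therefore in $V^K$ for every isotropy $K$ occurring in $P$; this is exactly what forces the intersection of $P$ with each fixed-point stratum of $M$ to drop codimension by two rather than by one. Once this is in hand, the conclusion follows mechanically from Corollary \ref{cor:equicodim}, applied orbit type by orbit type, together with finiteness of the orbit type stratification on the compact $G$-manifold $M$.
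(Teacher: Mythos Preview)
Your proposal is correct and follows the same strategy as the paper: reduce to Corollary \ref{cor:equicodim} with $s=1$ and verify the dimension inequality for each orbit type $(K)$ occurring in $P$, then intersect over finitely many orbit types. The key insight you isolate --- that the distinguished line $L$ lies in $V^H$ and hence in $V^K$ for every relevant $K$, forcing the codimension of $P^K$ in $M^K$ to be at least two --- is exactly what drives the paper's estimate as well.

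The only difference is in bookkeeping. The paper splits $V=W\oplus V^H$ and runs a chain of inequalities on the global dimensions of the spaces $(G\times_H\,\cdot\,)^K$, using $\dim V^H\geq 1$ for one drop and $\dim W^K\geq 1$ (else $P^K_{(K)}=\emptyset$) for the other. You instead compute tangent spaces at a representative point $[e,v]$ and read off the codimension directly. Your version is arguably more transparent, since the two lost dimensions (the line $L$ and the radial direction $\setR v$) are visible at once; the paper's version has the minor advantage of being phrased globally, so one does not need the implicit observation that the pointwise codimension is constant along each component of the $K$-stratum (which follows from $G$-translation, as you note). Either way the content is the same.
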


\begin{proof}
 By Corollary \ref{cor:equicodim}, it suffices to establish the estimate given there for equivariant discs. Hence we assume that $P$ is the boundary of an equivariant disc, centered at an orbit
 of type $(H)$. Since all orbits in $P$ have type lesser or equal than $(H)$, we only need to check orbit types $(K)\leq(H)$. Discs are defined in tubular neighbourhoods, so we can assume 
 $M=G\times_HV$ for some $H$-representation $V=W\oplus V^H$ with $\dim V^H\geq1$. The boundary of the equivariant disc is then given by $G\times_H S(L^\bot)$, where $L^\bot$ is a 
 codimension $1$ subspace of $V$ with orthogonal complement contained in $V^H$. Now we can estimate
 \begin{eqnarray*}
  \dim P^K_{(K)}+1-\dim M^K&=&\dim(G\times_HS(L^\bot))^K_{(K)}+1-\dim(G\times_HV)^K\\
                           &=&\dim(G\times_HS(L^\bot_{(K)})^K+1-\dim(G\times_HW\times V^H)^K\\
                           &=&\dim(G\times_HS(W_{(K)}))^K+1-\dim(G\times_HW)^K\\
                           & &-\dim V^H\\
                           &\leq&\dim(G\times_HS(W_{(K)}))^K-\dim(G\times_HW)^K\\
                           &\leq&\dim(G\times_HS(W))^K-\dim(G\times_HW)^K<0.
 \end{eqnarray*}
 For the last estimate, we have used that $\dim W^K\geq1$, since otherwise, $P^K_{(K)}$ would be empty and there would be nothing to prove. We conclude that equivariant homotopies 
 $M\times I\to M$ in an open and dense subset have no fixed points in $P$ of type $(K)$. Since there are only finitely many orbit types in $P$, the claim is proven.
\end{proof}

We summarize the results of this section in a Thom-Mather theorem for equivariant non-degeneracy.

\begin{theorem}\label{thm:equithommathernondeg}
 Let $M$ be a smooth compact $G$-manifold, $P\subseteq M$ an invariant compact submanifold. Let $K$ be a compact subset of $P\times\setR^s$.
\begin{enumerate}
 \item The set $\calN_K$ of smooth $G$-maps $P\times\setR^s\to M$ that are equivariantly non-degenerate in $K$ is open and dense.

 \item If $s=1$, $K=P\times\incc{0,1}$ and $H:P\times\setR^s\to M$ satisfies the condition that $H_t$ is equivariantly non-degenerate for all parameters $t\in\incc{0,1}$, then there is a 
 continuous equivariant isotopy $k:P\times\incc{0,1}\to P$ such that $k_0=\id_P$ and $k_t(\fix(H_t))=\fix(H_0)$.
\end{enumerate} 
\end{theorem}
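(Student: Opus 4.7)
My plan is to reduce both parts of the theorem to earlier results in the paper, since the statement essentially repackages them in the vocabulary of equivariant non-degeneracy. Part (1) is identical to Theorem \ref{thm:equinondegdense}, so nothing further is required.

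For Part (2), I will convert the assertion into an application of the equivariant Thom-Mather isotopy Theorem \ref{thm:equithommather}(iii). Form the $G$-homotopy
\[
\tilde H : P \times \incc{0,1} \to P \times M, \quad (x,t)\mapsto (x, H(x,t)),
\]
with $\incc{0,1}$ carrying the trivial $G$-action; the diagonal action on $P \times M$ makes $\tilde H$ genuinely equivariant. The target $P \times M$ is a compact $G$-manifold, and the diagonal $\Delta = \Delta(P,M) \subseteq P \times M$ is a closed invariant submanifold. By the very definition of equivariant non-degeneracy, the hypothesis that each $H_t$ is equivariantly non-degenerate amounts precisely to $\tilde H_t$ being equivariantly transverse to $\Delta$. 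Theorem \ref{thm:equithommather}(iii), applied with domain $P$, target $P \times M$, and submanifold $\Delta$, then yields an equivariant isotopy $k : P \times \incc{0,1} \to P$ with $k_0 = \id_P$ and
\[
k_t^{-1}\bigl(\tilde H_t^{-1}(\Delta)\bigr) = \tilde H_0^{-1}(\Delta).
\]
Noting that $\tilde H_t^{-1}(\Delta) = \fix(H_t)$ and that each $k_t$ is a homeomorphism, replacing $k_t$ by $k_t^{-1}$ yields the claimed identity $k_t(\fix(H_t)) = \fix(H_0)$, still with $k_0 = \id_P$.

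I anticipate no real obstacle here: all of the genuine technical work, namely producing the isotopy from pointwise equivariant transversality, is inherited from Theorem \ref{thm:equithommather}(iii). The only mild check is that this theorem remains applicable when the target of the homotopy differs from its domain, which is the natural reading of its statement and of the non-equivariant precursor Theorem \ref{thm:thommatherstrat}(2).
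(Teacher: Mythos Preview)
Your proposal is correct and follows essentially the same approach as the paper: Part (i) is literally Theorem \ref{thm:equinondegdense}, and Part (ii) is deduced from Theorem \ref{thm:equithommather}(iii) by unwinding the definition of equivariant non-degeneracy as $G$-transversality of $(\id_P,H_t)$ to the diagonal. Your write-up is in fact more detailed than the paper's one-line justification; the only point you flag yourself---that Theorem \ref{thm:equithommather}(iii) is stated for self-homotopies $M\times\incc{0,1}\to M$ rather than maps into a different target---is a harmless artifact of the paper's phrasing, since the underlying Thom--Mather isotopy lemma holds in that generality.
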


\begin{proof}
 Part (i) has been proven as Theorem \ref{thm:equinondegdense}, part (ii) follows immediately from the definition of equivariant non-degeneracy and Theorem \ref{thm:equithommather}.
\end{proof}

\begin{remark}
We point out that we have established the following bifurcation picture for $G$-maps. Generically, a $G$-map has finitely many fixed points such that for their isotropy groups, we have 
$\dim W(H)=0$ (by Proposition \ref{prop:equidimnondeg}). A homotopy of $G$-maps has finitely many bifurcation parameters (by Proposition \ref{prop:equiparatransnondeg}) and finitely many 
branches of fixed orbits undergo finitely many bifurcations (again by \ref{prop:equidimnondeg} and by Theorem \ref{thm:equithommathernondeg} (ii)). Along these branches, again 
$\dim W(H)=0$ holds.
\end{remark}

As a conclusion to this chapter, we give some simple examples of calculations of equivariant non-degeneracy.

\begin{example}
 \begin{enumerate}
  \item Let $G=\setZ_2$ and $V$ be the standard $1$-dimensional representation of $\setZ_2$. A $G$-map $f:V\times\setR^s\to V$ has the form $f(v, \lambda)=v\cdot h(v, \lambda)$ for an 
  invariant map $h:V\times\setR^s\to\setR$. $(v, \lambda)$ is a fixed point of $f$ if either $v=0$, or else $h(v, \lambda)=1$. If $v\neq0$,
  then equivariant non-degeneracy is just ordinary non-degeneracy. So we assume $v=0$, implying that $G_v=\setZ_2$. Equivariant non-degeneracy is then defined as equivariant transversality
  of the map $V\times\setR^s\to V\times V,\;(v, \lambda)\mapsto(v, v\cdot h(v, \lambda))$.

  After projecting to the orthogonal complement of the diagonal, we have to check $\setZ_2$-transversality of the map $v\mapsto v\cdot(1-h(v, \lambda))$ to $0$ at $0$.
  As generator for $\calC^\infty_{\setZ_2}(V, V)$ we choose the identity. The universal variety is thus given as
\[
 \Sigma=\{(v, t)\;|\;t\cdot v=0\},
\]
  which is stratified by $\{0\}$ and $\Sigma\setminus\{0\}$. The graph map of $f$ is
\[
 \Gamma_f(v, \lambda)=(v, 1-h(v, \lambda)).
\]
  If $h(0,\lambda)=1$, then $\Gamma_f(0, \lambda)=(0,0)$ and $\Gamma_f$ is transverse to $\{0\}$ at $0$ if and only if $T_{(0, \lambda)}\Gamma_f$ is surjective. This is the case if and 
  only if $\partial_\lambda h(0, \lambda)\neq0$.

  If $h(0, \lambda)\neq1$, then $\Gamma_f(0, \lambda)\in\Sigma\setminus\{0\}$. The tangential space of $\Sigma$ in this case is the $y$-axis and therefore, $f$ is equivariantly 
  non-degenerate without further conditions. 

  In conclusion, $f$ is equivariantly non-degenerate at $(0, \lambda)$ if either $h(0, \lambda)$ equals $1$ and $\partial_\lambda h(0, \lambda)\neq0$, or else $h(0, \lambda)\neq1$.

  Ordinary non-degeneracy of $f$ at $(0, \lambda)$ would require $\partial_\lambda f(0, \lambda)\neq0$ or $h(0, \lambda)\neq 1$. Since $\partial_\lambda f(0, \lambda)=0$ necessarily,
  equivariant non-degeneracy detects the maps with $h(0, \lambda)=1$ and $\partial_\lambda h(0, \lambda)\neq0$ as generic. 

  \item Let $G=\setS^1$ and $V$ the standard $2$-dimensional representation. Again, the interesting fixed points of an $\setS^1$-map $f:V\times\setR^s\to V$ are the points $(0, \lambda)$.
  In that case, we have to check equivariant transversality of the $\setS^1$-map
  \[
   F:V\times\setR^s\to V,\;(x, y, \lambda)\mapsto(x, y)-f(x, y, \lambda).
  \]
  As generators for $\calC^\infty_{\setS^1}(V, V)$ we choose
\[
 F_1(x, y)=(x, y),\;F_2(x, y)=(-y, x).
\]
  Writing $f(x, y, \lambda)=a(x, y, \lambda)\cdot(x, y)+b(x, y, \lambda)\cdot(-y, x)$, we have
\[
 \Gamma_F(x, y, \lambda)=(x, y, 1-a(x, y, \lambda), -b(x, y, \lambda)).
\]
The universal variety $\Sigma$ is given by the equations $sx-ty=0$, $sy+tx=0$, hence, this is just $\{(0,0)\}\times\setR^2\cup\setR^2\times\{(0,0)\}$, stratified by $\{(0,0,0,0)\}$ and
$\Sigma\setminus\{(0,0,0,0)\}$.

Again we have two possibilities. We assume first that $a(0,0,\lambda)=1$ and $b(0,0,\lambda)=0$. In that case, $\Gamma_F(0,0,\lambda)=(0,0,0,0)$ and $\Gamma_F$ is transverse to zero if 
and only if $T_{(0,0,\lambda)}\Gamma_F$ is surjective. This is the case if and only if the matrix 
\[
(\partial_\lambda a(0,0,\lambda), \partial_\lambda b(0,0,\lambda))^T
\]
has rank 2, i.e. there are $1\leq i, j\leq s$ such that 
\[
 \partial_{\lambda_i}a(0,0,\lambda)\cdot\partial_{\lambda_j}b(0,0,\lambda)-\partial_{\lambda_j}a(0,0,\lambda)\cdot\partial_{\lambda_i}b(0,0,\lambda)\neq0.
\]
In particular this is never the case if $s\leq1$.

The other possibility is that $a(0,0,\lambda)\neq1$ or $b(0,0,\lambda)\neq 0$. In that case, $\Gamma_F(0,0,\lambda)\in\{(0,0)\}\times\setR^2$ and $\Gamma_F$ is transverse to $\Sigma$
without further conditions.

In conclusion, $(0,0,\lambda)$ is a non-degenerate fixed point if either $T_{(0,0,\lambda)}f$ is not the identity, or else, $s\geq2$ and the matrix 
\[
 \begin{pmatrix}
  \partial_\lambda\partial_xf_1(0,0,\lambda)\\
  \partial_\lambda\partial_xf_2(0,0,\lambda)
 \end{pmatrix}
\]
is regular.
\end{enumerate}
\end{example}


\section{Genericity of Non-Degenerate Vector Fields}

We proceed to the investigation of equivariant vector fields and establish the following setting. Let $\Omega\subseteq M$ be an open subset and let $a, b$ be two real numbers, $0<a<b$. 
The set of (parametrized) $G$-vector fields $\xi:M\times\setR^s\to TM$ such that there is no periodic point of $\xi$ on $\partial(\Omega\times\inoo{a, b})$ is denoted with 
$\frakX_G(M\times\setR^s, \Omega, a, b)$. The periodic orbits of an element $\xi\in\frakX_G(M\times\setR^s, \Omega, a, b)$ lying inside of $\Omega\times\incc{a, b}$ are called the 
essential periodic orbits, otherwise they are called inessential. Note that the condition to be an element of $\frakX_G(M\times\setR^s, \Omega, a, b)$ means that there are no periodic orbits of a 
period in $\incc{a, b}$ meeting the boundary of $\Omega$, and there are no periodic orbits in $\Omega$ of period $a$ or $b$. The notion of equivariant non-degeneracy of a vector field is 
the following.

\begin{definition}
 Let $\xi\in\frakX_G(M\times\setR^s,\Omega, a, b)$ be a vector field, $\varphi:M\times\setR\times\setR^s$ its flow. Then $\xi$ is called equivariantly non-degenerate at a point 
 $(x, t, \lambda)\in\Omega\times\inoo{a,b}\times\setR^s$ provided its flow is equivariantly non-degenerate at that point.
\end{definition}

The proof of density of equivariantly non-degenerate $G$-vector fields depends on a geometric construction which stems from \cite{field1}. The techniques of the preceeding section
seem not be be adjustable to this case, since the additional condition of the maps under consideration to be flows prevents any straight forward adaption.

\begin{lemma}\label{lemma:equipoincarehomotopy} 
Let $H$ be an element of $\frakX_G(M\times\setR, \Omega, a, b)$ and $\gamma_\lambda$ be an essential periodic orbit of $H_\lambda$. Choose a Poincar\'{e} system 
$(D, D', P_\lambda, t_\lambda)$ for $\gamma_\lambda$ such that the Poincar\'{e} maps of all fields in a neighbourhood $\calU_1$ of $H_\lambda$ are defined as maps $D'\to D$. Let $V, U$ be 
invariant open neighbourhoods of $\gamma_\lambda$ such that
\[
 \conj{U}\subseteq\bigcup_{x\in D'}\varphi_\mu(\incc{0, t_\mu(x)})
\]
and
\[
 \varphi_\mu(\incc{0, t_\mu(x)})\subseteq U
\]
for $x\in\conj{V}\cap D'$ and $\mu$ in a neighbourhood of $\lambda$, say, if $\abs{\lambda-\mu}<3\eps$, $\eps>0$. Let $P$ be the homotopy
\[
 P:D'\times\incc{\lambda-3\eps,\lambda+3\eps}\to D
\]
given by the Poincar\'{e} maps of the $H_\mu$, $\mu\in\incc{\lambda-3\eps,\lambda+3\eps}$. Then there is a neighbourhood $\calU$ in the set of $G$-homotopies $D'\times\incc{\lambda-3
\eps,\lambda+3\eps}\to D$ equal to $P$ outside of $\conj{V}\cap D\times\incc{\lambda-2\eps,\lambda+2\eps}$ and a continuous map $\chi:\calU\to\frakX_G(M\times\setR, \Omega, a, b)$ such that
\begin{enumerate}

   \item for $Q\in\calU$, $\chi(Q)_\mu$ has Poincar\'{e} map $Q_\mu$ for $\abs{\lambda-\mu}<\eps$.
   
   \item for $Q\in\calU$, $\chi(Q)$ equals $H$ outside of $U\times\incc{\lambda-2\eps,\lambda+2\eps}$.
   
   \item $\chi(P)=H$.
   
\end{enumerate}
\end{lemma}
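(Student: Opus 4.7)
The strategy is to use the flow of $H$ to introduce equivariant tubular coordinates around $\gamma_\lambda$ in which $H_\mu$ is ``vertical'', then to replace the top identification by $Q_\mu$ and push the modification back into $M$. Concretely, for each $\mu\in\incc{\lambda-3\eps,\lambda+3\eps}$ the map
\[
\psi_\mu:D'\times\incc{0,1}\to M,\quad(x,s)\mapsto\varphi_\mu(x, s\,t_\mu(x)),
\]
is an equivariant embedding onto a $G$-invariant open neighbourhood $W_\mu$ of $\gamma_\lambda$ containing $\conj U$, with $\psi_\mu(x,0)=x$ and $\psi_\mu(x,1)=P_\mu(x)$. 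Its pullback $\psi_\mu^*H_\mu$ equals the explicit ``vertical'' field $\tfrac{1}{t_\mu(x)}\partial_s$, so the task reduces to deforming this product identification so that the slice $s=1$ is sent to $Q_\mu(x)$ instead of $P_\mu(x)$.

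For each $Q\in\calU$, I would then construct a modified equivariant embedding $\tilde\psi^Q_\mu:D'\times\incc{0,1}\to M$ which coincides with $\psi_\mu$ for $s$ near $0$ (so that $\tilde\psi^Q_\mu(x,0)=x$ persists) and satisfies $\tilde\psi^Q_\mu(x,1)=Q_\mu(x)$. I pick a bump $\rho:\incc{0,1}\to\incc{0,1}$ equal to $0$ near $0$ and $1$ near $1$, an invariant bump $\beta:D'\to\incc{0,1}$ equal to $1$ on $\conj V\cap D'$ and $0$ outside a slightly larger invariant neighbourhood, and $\sigma:\setR\to\incc{0,1}$ equal to $1$ on $\incc{\lambda-\eps,\lambda+\eps}$ and $0$ outside $\incc{\lambda-2\eps,\lambda+2\eps}$. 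The interpolation from $P_\mu(x)$ to $Q_\mu(x)$ can be performed fibrewise in the $H$-representation $L^\bot$ of the equivariant disc $D\cong G\times_H\setB(L^\bot)$, and is then propagated backwards from $s=1$ into $D'\times\incc{0,1}$ using $\rho(s)\beta(x)\sigma(\mu)$ as its weight. Setting $\chi(Q)_\mu:=(\tilde\psi^Q_\mu)_*\bigl(\tfrac{1}{t_\mu(x)}\partial_s\bigr)$ on $W_\mu$ and $\chi(Q)_\mu:=H_\mu$ elsewhere produces the desired equivariant vector field.

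I would then check (i)--(iii) directly. Property (iii) is immediate because for $Q=P$ the interpolation is trivial and $\tilde\psi^P_\mu=\psi_\mu$, so $\chi(P)=H$. Property (ii) is forced by the supports of $\beta$ and $\sigma$ together with the assumption $\varphi_\mu(\incc{0,t_\mu(x)}\times\{x\})\subseteq U$ for $x\in\conj V\cap D'$, which confines the modification to $U\times\incc{\lambda-2\eps,\lambda+2\eps}$. For property (i), when $\abs{\mu-\lambda}<\eps$ and $x\in\conj V\cap D'$ one has $\sigma(\mu)=\beta(x)=1$, so by construction the $\tilde\psi^Q_\mu$-image of $\{x\}\times\incc{0,1}$ is an integral curve of $\chi(Q)_\mu$ joining $x$ to $Q_\mu(x)$ in time $t_\mu(x)$, identifying the Poincar\'e map of $\chi(Q)_\mu$ with $Q_\mu$. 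That $\chi(Q)\in\frakX_G(M\times\setR,\Omega,a,b)$ for $Q$ close to $P$ follows from the openness of the condition of having no periodic orbits on $\partial(\Omega\times\inoo{a,b})$, since $\chi$ depends continuously on $Q$, and this continuity is itself a consequence of the smooth dependence of $\tilde\psi^Q_\mu$ on $Q$.

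The main technical obstacle is ensuring that the interpolation producing $\tilde\psi^Q_\mu$ can be carried out simultaneously equivariantly and as a diffeomorphism. Equivariance is handled through the slice-fibre structure $D\cong G\times_H\setB(L^\bot)$: one interpolates linearly in the $H$-representation $L^\bot$ inside the slice, where the linear structure is $H$-equivariant, and then extends by the $G$-action, using that $P_\mu$ and $Q_\mu$ are $G$-maps and that $\rho$, $\beta$, $\sigma$ are invariant in their respective variables. Regularity of $\tilde\psi^Q_\mu$ as an embedding is guaranteed by shrinking $\calU$, since for $Q$ sufficiently $\calC^\infty$-close to $P$ the perturbation of $\psi_\mu$ is $\calC^1$-small and injectivity together with the non-degenerate Jacobian persist.
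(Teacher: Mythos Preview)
Your approach has a genuine gap at the section $D$. First, the map $\psi_\mu:D'\times\incc{0,1}\to M$ is \emph{not} an embedding: since $\psi_\mu(x,0)=x$ and $\psi_\mu(x,1)=P_\mu(x)$ both lie in $D$, the images of $D'\times\{0\}$ and $D'\times\{1\}$ overlap. This already means the push-forward $(\tilde\psi^Q_\mu)_*\bigl(\tfrac{1}{t_\mu(x)}\partial_s\bigr)$ is not automatically a well-defined vector field on the image; at a point $y\in D$ you get two candidate vectors, one from the outgoing curve $s\mapsto\tilde\psi^Q_\mu(y,s)$ near $s=0$ and one from the incoming curve $s\mapsto\tilde\psi^Q_\mu(x,s)$ near $s=1$ where $Q_\mu(x)=y$. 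For $\chi(Q)$ to be a smooth vector field these must agree. With your choice of $\rho$ (equal to $0$ near $0$ and $1$ near $1$), the outgoing vector at $y$ is $H_\mu(y)$, but the incoming vector depends on how your linear interpolation in $L^\bot$ behaves as $s\to1$ and has no reason to equal $H_\mu(y)$. So the field you construct is in general discontinuous across $D$. Making $\rho$ vanish near $s=1$ as well would fix the gluing but would then force $\tilde\psi^Q_\mu(x,1)=P_\mu(x)$, defeating the purpose.

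The paper sidesteps this by performing the modification \emph{strictly in the interior} of the flow tube rather than at the return section. It picks $0<r<s<t_0$ (with $t_0$ a uniform lower bound on return times) and an equivariant isotopy $K_t$, supported in $\incc{r,s}$, from the inclusion $D'\hookrightarrow D$ to $P^{-1}\circ Q$. The new curves are then $\psi(y,\mu,t)=\varphi_\mu(K_t(y),t)$: for $t<r$ these coincide with the original flow line through $y$, and for $t>s$ they coincide with the original flow line through $P_\mu^{-1}(Q_\mu(y))$. Thus near $D$ on both the outgoing and incoming side the new field equals $H_\mu$ exactly, and gluing is automatic. The crucial idea you are missing is that one should not perturb the \emph{endpoint} of each flow line, but rather reparametrize \emph{which} integral curve of $H_\mu$ one is on, switching smoothly from the $y$-curve to the $P_\mu^{-1}(Q_\mu(y))$-curve while still away from $D$.
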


\begin{proof} 
Let $t_\xi$ be the period map of an element $\xi$ of $\calU_1$, i.e. $t_\xi$ is the minimal $t\in\setR^{>0}$ such that $\varphi_\xi(x, t_\xi(x))\in D$. Since $\gamma_\lambda$ is a periodic 
orbit and by definition of an equivariant Poincar\'{e} system, we have
\[
 t_0=\inf_{x\in D'}\inf_{\xi\in\calU_1}t_\xi(x)>0.
\]
Choose real numbers $r,s$ with $0<r<s<t_0$. Let $Q:D'\times\incc{\lambda-3\eps,\lambda+3\eps}\to D$ be any $G$-homotopy equal to $P$ outside of $V\cap D'\times\incc{\lambda-2\eps,\lambda+2\eps}$. By 
choosing $Q$ close enough to $P$, we can assume that $P^{-1}\circ Q$ (where $P^{-1}$ is to be taken fibrewise) is a $G$-embedding that is equivariantly isotopic to the inclusion $\conj{D'}\times
\incc{\lambda-3\eps,\lambda+3\eps}\hookrightarrow D\times\incc{\lambda-3\eps,\lambda+3\eps}$. 
Let
\[
 K:\conj{D'}\times\incc{\lambda-3\eps,\lambda+3\eps}\times\incc{r,s}\to D\times\incc{\lambda-3\eps,\lambda+3\eps}
\]
be an isotopy joining the two maps. For the flow $\varphi$ of $H$, define
\[
 \psi(y, \mu, t)=\varphi(K(y, \mu, t), t)
\]
for $y\in D'$, $\mu\in\incc{\lambda-3\eps,\lambda+3\eps}$, $t\in\incc{0, t_\mu(y)}$, where we extend $K$ smoothly to the interval $\incc{0,t_0}$ such that $K_0=K_r$, $K_s=K_{t_0}$ and 
every $K_t$ is equal to the inclusion close to the boundary of $D'\times\inoo{\lambda-3\eps,\lambda+3\eps}$. We have $\psi\equiv\varphi$ in a neighbourhood of the boundary of 
$D'\times\incc{\lambda-3\eps,\lambda+3\eps}$. By choosing $Q$ sufficiently close to $P$, we can achieve that none of the curves $t\mapsto\psi(y, \mu, t)$ meet $M - U$. Now $\varphi_\mu$ 
is an embedding when restricted to the interval $\incc{r,s}$ and the set of embeddings is open. Thus, $\psi_\mu$ can be made an embedding, too. Take 
$\mu\in\incc{\lambda-\eps,\lambda+\eps}$ and define
\[
 \eta_\mu(\psi(y, \mu, t))=\diff{s}\psi(y, \mu, s)\big|_{s=t}.
\]
$\eta_\mu$ is a $G$-vector field defined on the image of $\psi_\mu$. Extend $\eta$ to a homotopy on $M\times\incc{\lambda-3\eps,\lambda+3\eps}$ such that $\eta_\mu=H_\mu$ outside of 
$U\times\incc{\lambda-2\eps,\lambda+2\eps}$. Clearly, this extension can be done continuously in $Q$. The integral curves of $\eta$ for $\abs{\lambda-\mu}<\eps$ are, up to 
reparametrization, just the curves $t\mapsto \psi(y, \mu, t)$ and we calculate
\[
 \psi(y, \mu, t_\mu(y))=\varphi(K(y, \mu, t_\mu(y)), \mu, t_\mu(y))=\varphi(P_\mu^{-1}\circ Q_\mu(y), \mu, t_\mu(y))=Q_\mu(y).
\]
So the Poincar\'{e} homotopy induced by $\eta$ in $\incc{\lambda-\eps,\lambda+\eps}$ is given by $Q$. The definition $\chi(Q)=\eta$ gives the desired result.
\end{proof} 

Next we prove openness of equivariantly non-degenerate homotopies and a slightly more general result we will need in the proof of the density theorem.

\begin{proposition} 
Let $H\in\frakX_G(M\times\setR,\Omega, a, b)$ and $H$ is equivariantly non-de\-ge\-ne\-ra\-te in the compact subset $K\times J\subseteq\Omega\times I\subseteq M\times\setR$. Then there 
is a neighbourhood $\calU$ of $H$ such that every element of $\calU$ is equivariantly non-degenerate in $K\times J$. 
\end{proposition}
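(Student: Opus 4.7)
The plan is to combine two ingredients: a pointwise refinement of the openness part of Theorem~\ref{thm:equithommather}(i), together with the continuous dependence of the flow on the vector field, and then patch using compactness of $K\times J$.

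I first treat a single point $p=(x_0,t_0,\lambda_0)\in K\times J$. By definition, equivariant non-degeneracy of $H$ at $p$ means that the $G$-map
\[
 \Phi_H^{t_0,\lambda_0}\colon M\to M\times M,\quad x\mapsto(x,\varphi_H(x,t_0,\lambda_0)),
\]
is $G$-transverse to the diagonal $\Delta(M)\subseteq M\times M$ at $(x_0,x_0)$. Unwinding via the graph map, this is transversality of $\Gamma_{\Phi_H^{t_0,\lambda_0}}$ to the universal variety $\Sigma$ at one point, and $\Sigma$ is Whitney stratified. Whitney's condition (a) then implies that pointwise transversality to $\Sigma$ is preserved under small perturbations both of the point and of the map. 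Extracting this local version from the openness assertion in Theorem~\ref{thm:equithommather}(i), I obtain an open neighbourhood $W_p$ of $(x_0,x_0)$ in $M\times M$ and a Whitney neighbourhood $\calV_p$ of $\Phi_H^{t_0,\lambda_0}$ in $\calC^\infty_G(M,M\times M)$ such that every map in $\calV_p$ is $G$-transverse to $\Delta(M)$ at every point of $W_p$. Combined with joint continuity of $\varphi_H$ in $(t,\lambda)$, this gives a neighbourhood $U_p$ of $p$ in $\Omega\times\inoo{a,b}\times\setR$ on which non-degeneracy at $q\in U_p$ is implied by sufficient closeness of the corresponding time-map to $\Phi_H^{t_0,\lambda_0}$.

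Second, I invoke standard ODE theory: on a compact set of space-time-parameters disjoint from the boundary of the flow's domain of definition, the flow $\xi\mapsto\varphi_\xi$ depends $C^\infty$-continuously on $\xi$. Since $K\times J$ is compact, $K\subseteq\Omega$, $J\subseteq\inoo{a,b}$, and $H$ itself has all relevant orbits staying inside $\Omega$ over periods in $J$, a small Whitney thickening of $H$ in $\frakX_G(M\times\setR,\Omega,a,b)$ has the same property. Therefore, for each $p$ there is an open neighbourhood $\calU_p$ of $H$ such that every $\xi\in\calU_p$ produces time-maps lying in $\calV_p$ over a neighbourhood of $p$, and hence is equivariantly non-degenerate at every point of $U_p$.

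Finally, compactness of $K\times J$ yields a finite subcover $U_{p_1},\dots,U_{p_n}$, and $\calU=\bigcap_{i=1}^n\calU_{p_i}$ is the desired open neighbourhood of $H$: any $\xi\in\calU$ is equivariantly non-degenerate at every point of $K\times J$. The main obstacle is the first step, namely extracting a pointwise/local openness statement from the globally phrased Theorem~\ref{thm:equithommather}(i); this reduction is routine once one reads off the graph-map definition of equivariant transversality and uses Whitney's condition (a), but it is the place that requires care. Continuity of the flow and the compactness patching are then standard.
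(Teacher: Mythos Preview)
Your overall strategy---openness of $G$-transversality together with continuous dependence of the flow on the vector field, patched by compactness---is the same idea as the paper's. However, your first step misidentifies the map whose transversality is at stake. By the paper's definition, equivariant non-degeneracy of the flow $\varphi$ at $(x_0,t_0,\lambda_0)$ means that the map
\[
 F\colon M\times\setR\times\setR\to M\times M,\qquad (x,t,\lambda)\mapsto (x,\varphi(x,t,\lambda))
\]
is $G$-transverse to $\Delta(M)$ at $(x_0,t_0,\lambda_0)$, with $(t,\lambda)$ playing the role of the $\setR^s$-parameters in the definition of equivariant non-degeneracy. You instead freeze $(t_0,\lambda_0)$ and consider $\Phi_H^{t_0,\lambda_0}\colon M\to M\times M$; transversality of this fixed-time map is a strictly stronger condition (two fewer domain dimensions are available to span the normal of the stratum), and it is \emph{not} implied by the hypothesis. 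So your opening assertion that $\Phi_H^{t_0,\lambda_0}$ is $G$-transverse to $\Delta$ is unjustified. A related symptom is the line $p=(x_0,t_0,\lambda_0)\in K\times J$: the set $K\times J\subseteq M\times\setR$ carries no time component, and the compact set over which one must argue is $K\times J\times\incc{a,b}$.

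The fix is immediate: work with $F$ throughout rather than with the frozen-parameter map. Once you do so your pointwise-plus-compactness argument is valid, and in fact the paper's proof is just the streamlined version of this: it invokes the openness part of Theorem~\ref{thm:equithommather} once for $F$ on the compact set $K\times J\times\incc{a,b}$, and then observes that the assignment $H'\mapsto F'$ is continuous. There is no need to localize and re-cover, since the global openness statement already handles the whole compact set in one step.
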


\begin{proof} 
Since $H$ is equivariantly non-degenerate, the map
\[
 F:M\times\setR\times\incc{a,b}\lra M\times M,\quad(x, t, \lambda)\longmapsto(x, \varphi_\lambda(x, t))
\]
is $G$-transverse to the diagonal in $K\times J\times\incc{a,b}$, $\varphi$ the flow of $H$. By openness of $G$-transversality, we find a neighbourhood $\calU_1$ of $F$ such that all 
elements of $\calU_1$ are $G$-transverse to the diagonal in $K\times J\times\incc{a,b}$. Clearly, if $H'$ is close to $H$, the associated map $F'$ is close to $F$, so we find a 
neighbourhood $\calU$ of $H$ of equivariantly non-degenerate maps in $K\times J\times\incc{a,b}$, i.e. all essential periodic orbits in $K\times J$ are equivariantly non-degenerate.
\end{proof}

With these two auxiliary results at hand, we can prove genericity of equivariantly non-degenerate homotopies of vector fields. It is based on the proof of the non-equivariant Kupka-Smale
theorem as can be found in \cite{palis}.

\begin{theorem}\label{thm:equigenvect}
The subset of $\frakX_G(M\times\incc{0,1}, \Omega, a, b)$ consisting of equivariantly non-degene\-rate $G$-homotopies is open and dense.
\end{theorem}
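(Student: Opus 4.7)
The plan is two-fold. Openness follows from the preceding proposition plus a compactness argument that preserves membership in $\frakX_G(M\times\incc{0,1},\Omega,a,b)$ under small perturbations. Density is the harder direction and proceeds by covering the compact set $E$ of essential periodic points of $H$ by finitely many of the neighbourhoods produced by Lemma \ref{lemma:equipoincarehomotopy}, and then inductively perturbing $H$ tube by tube: the lemma translates perturbations of the equivariant Poincar\'e homotopies into perturbations of the underlying vector field, and Theorem \ref{thm:equinondegdense} supplies the required equivariantly non-degenerate perturbation at the level of the Poincar\'e map.

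For openness, I would first show that the condition of lying in $\frakX_G(M\times\incc{0,1},\Omega,a,b)$ is itself open. The set $\partial(\Omega\times\inoo{a,b})\times\incc{0,1}$ is compact, and continuous dependence of periodic points on the vector field prevents a sufficiently small perturbation of $H$ from acquiring periodic orbits there. For every such nearby $H'$, its essential periodic points lie in a fixed compact subset $K\times J\subseteq \Omega\times\inoo{a,b}\times\incc{0,1}$, so the preceding proposition applied to $K\times J$ yields the desired neighbourhood of equivariantly non-degenerate homotopies.

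For density, fix a neighbourhood $\calU$ of $H$ and choose a finite covering of $E$ by neighbourhoods $V_i\times J_i$ of orbits $\gamma_{\lambda_i}$ of $H_{\lambda_i}$ as in Lemma \ref{lemma:equipoincarehomotopy}, with associated Poincar\'e systems $(D_i,D_i',P_i,t_i)$ and realisation maps $\chi_i$ acting on Poincar\'e homotopies $Q:D_i'\times J_i\to D_i$. I would then proceed by induction on $k$. Assuming $H^{(k-1)}\in\calU$ has been made equivariantly non-degenerate on a neighbourhood of $\bigcup_{i<k}V_i\times J_i$, apply Theorem \ref{thm:equinondegdense}(i) to the Poincar\'e homotopy of $H^{(k-1)}$ at the $k$th tube to obtain an arbitrarily close equivariantly non-degenerate approximation $Q_k$, and set $H^{(k)}=\chi_k(Q_k)$. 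By property (ii) of the lemma, $H^{(k)}$ agrees with $H^{(k-1)}$ outside a prescribed neighbourhood of $\gamma_{\lambda_k}$, and by the openness statement already proved, $Q_k$ can be chosen small enough to preserve the non-degeneracy in place on the first $k-1$ tubes and to keep $H^{(k)}$ inside $\calU$. After $m$ steps, $H'=H^{(m)}\in\calU$ is equivariantly non-degenerate on all of $E$, hence on $\Omega\times\inoo{a,b}\times\incc{0,1}$.

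The main obstacle is the nested choice of perturbation scales in this induction: each $Q_k$ must simultaneously be small enough to keep $H^{(k)}$ inside $\calU$ and to preserve the equivariant non-degeneracy accumulated on the first $k-1$ tubes, while still producing an equivariantly non-degenerate Poincar\'e homotopy on the $k$th tube. This is resolved by combining the local support of the $\chi_i$ from the lemma with the openness result, but it forces one to fix all the neighbourhood scales and the induced bounds on admissible $Q_k$ in advance, before performing any of the individual surgeries.
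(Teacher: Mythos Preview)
Your proposal is correct and follows essentially the same route as the paper: openness from the preceding proposition, then density by covering the compact essential periodic set with finitely many Poincar\'e tubes and inductively applying Lemma~\ref{lemma:equipoincarehomotopy} together with Theorem~\ref{thm:equinondegdense}, using openness of non-degeneracy at each stage to protect earlier gains. The only point the paper makes more explicit is that one must shrink the working neighbourhood of $H$ in advance so that every nearby homotopy has all its essential periodic orbits inside the tube cover (otherwise non-degeneracy on the tubes does not imply non-degeneracy on all of $\Omega\times\inoo{a,b}\times\incc{0,1}$), but you gesture at exactly this in your final paragraph.
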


\begin{proof} 
Openness is a trivial corollary of the preceeding proposition. It therefore remains to prove density. We proceed in 5 steps.
\begin{enumerate}

   \item Take a homotopy $H\in\frakX_G(M\times\setR, \Omega, a, b)$ and let $\calU$ be any neighbourhood of $H$. Let $\Gamma\subseteq\Omega\times\inoo{a,b}$ be the set of essential geometric periodic 
   points of $H$, i.e. $(x, \lambda)\in\Gamma$ if and only if there is a $t\in\incc{a,b}$ such that $\varphi_\lambda(x, t)\in Gx$. By assumption, $\Gamma$ is compact. Choose an equivariant Poincar\'{e} 
   system $(D_\gamma, D'_\gamma, p_\gamma, t_\gamma)$ for every essential periodic orbit $\gamma$ of $H$ in such a way that in a neighbourhood $\calU_\gamma$ of $H$, all the Poincar\'{e} maps 
   of elements of $\calU_\gamma$ are defined as maps $D'_\gamma\to D_\gamma$. If $\gamma$ is a periodic orbit of $H_\lambda$, we find an $\eps_\gamma>0$ such that the Poincar\'{e} maps of 
   $H_\mu$ for $\abs{\lambda-\mu}\leq3\eps_\gamma$ constitute a Poincar\'{e} homotopy 
   \[
    P:D'_\gamma\times\incc{\lambda-3\eps_\gamma,\lambda+3\eps_\gamma}\to D_\gamma
   \]
   and so do all elements of $\calU_\gamma$.

   \item Choose open invariant neighbourhoods $W_\gamma\subseteq V_\gamma\subseteq U_\gamma$ of the underlying geometric orbit of $\gamma$ such that $\conj{W}_\gamma\subseteq V_\gamma$ and 
   \[
    \conj{U}_\gamma\subseteq\bigcup_{x\in D_\gamma'}\tilde{\varphi}_\mu(\incc{0, \tilde{t}_\mu(x)}),
   \]
   \[
    \tilde{\varphi}_\mu(x, \incc{0, \tilde{t}_\mu(x)})\subseteq U_\gamma
   \]
   for all flows $\tilde{\varphi}$ of elements in $\calU_\gamma$ and all $x\in\conj{V}_\gamma\cap D_\gamma'$, $\mu\in\incc{\lambda-3\eps_\gamma, \lambda+3\eps_\gamma}$.

   \item The sets $W_\gamma\times\inoo{\lambda-\eps_\gamma,\lambda+\eps_\gamma}$ cover $\Gamma$ and we find a finite subcover, corresponding to orbits $\gamma_1,\dots, \gamma_m$ at parameters $\lambda_1,
   \dots, \lambda_m$. For simplicity, let
   \[
    W_j=W_{\gamma_j}\times\inoo{\lambda_j-3\eps_{\gamma_j},\lambda_j+3\eps_{\gamma_j}}
   \]
   and $\eps_j=\eps_{\gamma_j}$, $j=1,\dots, m$. Define $W=W_1\cup\dots\cup W_m$ and $\calU_1=\calU_{\gamma_1}\cap\dots\cap\calU_{\gamma_m}$. Then $K=\conj{\Omega}\times I-W$ is compact and all periodic 
   orbits of $H$ meeting $K$ are inessential. So the same holds in a neighbourhood of $H$ which we can assume to contain $\calU_1$. We conclude that every $G$-homotopy $H'$ in $\calU_1$ satisfies
   \begin{itemize}

      \item[-] $H'\in\calU$.

      \item[-] All periodic orbits of $H'$ meeting $K$ are inessential.

      \item[-] Lemma \ref{lemma:equipoincarehomotopy} is applicable to $H'$, the sets $V_{\gamma_j}, U_{\gamma_j}$ and the respective Poincar\'{e} system $(D_{\gamma_j}, D'_{\gamma_j}, 
      p'_{\gamma_j}, t'_{\gamma_j})$.
   
   \end{itemize}

   \item Assume that we have constructed a $G$-homotopy $H_k$, $0\leq k\leq m-1$, such that $H_k\in\calU$, all periodic orbits meeting $K$ are inessential and $H_k$ is equivariantly 
   non-degenerate in $\conj{W_1\cup\dots\cup W_k}$. We find a neighbourhood $\calW_k$ of $H_k$ such that every element of $\calW_k$ is equivariantly non-degenerate in $\conj{W_1\cup\dots\cup W_k}$. Now 
   we apply Lemma \ref{lemma:equipoincarehomotopy} to $H_k$, the sets $V_{\gamma_{k+1}}, U_{\gamma_{k+1}}$ and the corresponding Poincar\'{e} system. We find a neighbourhood $\calV_{k+1}$ of the induced 
   Poincar\'{e} homotopy and a map $\chi_{k+1}:\calV_{k+1}\to \frakX_G(M\times\setR, \Omega, a, b)$ with the properties stated in the lemma. So we can use the genericity results for equivariantly non-degenerate 
   homotopies of maps. Choose an equivariantly non-degenerate homotopy 
   \[
    \conj{W_{\gamma_{k+1}}\cap D'_{\gamma_{k+1}}}\times\incc{\lambda_{k+1}-\eps_{k+1},\lambda_{k+1}+\eps_{k+1}}\to D_{\gamma_{k+1}}
   \]
   and extend it to a $G$-homotopy
   \[
    Q:(U_{\gamma_{k+1}}\cap D'_{\gamma_{k+1}})\times\incc{\lambda_{k+1}-3\eps_{k+1},\lambda_{k+1}+3\eps_{k+1}}\to D_{\gamma_{k+1}}
   \]
   that is equal to the Poincar\'{e} homotopy of $H_k$ outside of 
   \[
    V_{\gamma_{k+1}}\cap D'_{\gamma_{k+1}}\times\incc{\lambda_{k+1}-2\eps_{k+1},\lambda_{k+1}+2\eps_{k+1}}.
   \]
   By choosing the initial homotopy close to the Poincar\'{e} homotopy, we can achieve that $Q\in\calV_{k+1}$ and $\chi(Q)\in\calW_k\cap\calU_1$. Define $H_{k+1}=\chi(Q)$. By construction, $H_{k+1}$ is 
   equivariantly non-degenerate on $\conj{W_{k+1}}$. Since $H_{k+1}\in\calW_k$, $H_{k+1}$ is equivariantly non-degenerate on $\conj{W_1\cap\dots\cap W_{k+1}}$. Since $H_{k+1}\in\calU_1$, $H_{k+1}\in
   \calU$ and all periodic orbits meeting $K$ are inessential.

   \item Arriving at $H_m$, this homotopy is an element in $\calU$, equivariantly non-degenerate in $\conj{W_1\cup\dots\cup W_m}\supseteq\conj{\Omega}\times I-K$ and all periodic orbits meeting 
   $K$ are inessential. So $H_m$ is equivariantly non-degenerate, proving our claim.

\end{enumerate}
\end{proof}


\section{Construction of the Index} 
The idea for the construction of the index dates back to a paper of Chow and Mallet--Paret \cite{chowmallet}, who gave a bifurcation theoretical definition of the ordinary Fuller index. 
One assigns the fixed point index of an associated Poincar\'{e} map to an isolated periodic orbit. Using genericity theory, it becomes clear that a corrective factor has to be introduced 
to handle period multiplying bifurcations. 
The main properties are then derived using genericity theory. So all we need for the definition is an index for equivariant maps that counts fixed points, distinguished by symmetry. 
Such an index is easily obtained from an equivariant Lefschetz number in the sense of \cite{laitinen}. Indeed, assume that $f:G\times_H\setB(V)\to G\times_HV$ is a $G$-map without fixed 
points on the boundary. Then the set of fixed points of $f$ is contained in the interior of $G\times_H\setB(V)$ and it is easy to see that $f$ is equivariantly homotopic to a map
$\tilde{f}:G\times_H\setB(V)\to G\times_H\setB(V)$ which is equal to $f$ in a neighbourhood of $\fix(f)$. The equivariant fixed point index of $f$ is then defined to be the equivariant 
Lefschetz number of $\tilde{f}$. 

We recall that an equivariant Lefschetz number of a self-map of a $G$-CW complex can be defined as an element of the tom Dieck ring 
\[
 \setU_G=\bigoplus_{(H)}\setZ\cdot(H),
\]
the sum ranging over the isotropy subgroups of $G$, compare \cite{laitinen} or \cite{ynrohc}, where in the last reference we have to modify the augmentation map in order to count fixed 
points rather than fixed orbits. The basic property of such a Lefschetz number is the fact that non-vanishing of the $(H)$-component implies existence of an orbit of fixed points of type 
at least $(H)$, with respect to the partial order of isotropy types.

Next, we assume that $M$ is a $G$-manifold and $f:M\to M$ a $G$-map with finitely many orbits of fixed points. Then a fixed point index of $f$ is defined via additivity. More precisely, 
take a fixed point $x$ of $f$ and let $\gamma=Gx$. Since $\gamma$ is isolated, there is a pair of tubular neighbourhoods $U_\gamma'\subseteq U_\gamma$ such that $\gamma$ is the unique 
orbit of fixed points of $f$ in $U_\gamma$ and $f(\conj{U_\gamma}')\subseteq U_\gamma$. The local fixed point index of $f$ is the element of $\setU_G$ defined above and is denoted by
\[
 i_G(f, U_\gamma', U_\gamma)\in\setU_G.
\]
Then 
\[
 i_G(f, M)=\sum_\gamma i_G(f, U_\gamma', U_\gamma)
\]     
is the equivariant fixed point index of $f$, the sum ranging over the finitely many orbits of fixed points of $f$. It is also evident, using $G$-homotopy invariance of the Lefschetz number, 
that for general $G$-maps, the fixed point index can be retrieved via approximation. In a similar fashion we can define an equivariant fixed point index $i_G(f, U, M)$ of a $G$-map 
$f:\conj{U}\to M$, where $U$ is an open invariant subset of $M$ such that $f$ has no fixed points on the boundary of $U$.

We turn to the construction of the equivariant Fuller index. The general setup is as follows. Let $M$ be a compact $G$-manifold, $\Omega\subseteq M$ an invariant open subset, $0<a<b<\infty$ real numbers.
Let $\xi$ be a smooth vector field in $\frakX_G(M, \Omega, a, b)$. Assume that $\xi$ has finitely many isolated essential periodic orbits $\gamma_1,\dots,\gamma_n$. Let $p_1,\dots, p_n$ be 
their minimal periods, $T_1,\dots, T_n\in\incc{a,b}$ their periods, such that $T_j=k_j\cdot p_j$ for some $k_j\in\setN$ and every $j\in\{1,\dots, n\}$. Choose equivariant Poincar\'{e} systems 
$(D_j, D'_j, P_j, t_j)$ around any point on $\gamma_j$ for each $j$. The equivariant Fuller index is defined to be
\[
 I_F^G(\xi,\Omega)=\sum_{j=1}^nI_G(\gamma_j)=\sum_{j=1}^n\frac1{k_j}\tensor i_G(P_j, D_j', D_j)\in\setQ\tensor\bigoplus_{(H)}\setZ.
\]
Clearly, the equivariant Fuller index does not depend on the choice of Poincar\'{e} system, because any two of these for the same orbit are joined by an equivariant isotopy (namely the flow of $\xi$).
For arbitrary $G$-vector fields $\xi\in\frakX_G(M, \Omega, a, b)$, we define the equivariant Fuller index to be the limit of the Fuller indices of a sequence of equivariantly non-degenerate fields 
converging to $\xi$. We will establish in the following that the index is well defined. This will come out along the lines when we prove the basic properties of the index, namely homotopy invariance, 
the solution property and additivity. We start with the last.

\begin{proposition}\label{prop:equifulleradd} 
The equivariant Fuller index is additive, that is, if $\Omega_1,\Omega_2$ are disjoint, invariant, open subsets of $\Omega$ such that all essential periodic orbits of $\xi\in\frakX_G(M, \Omega, 
a, b)$ are contained in $(\Omega_1\cup\Omega_2)\times\inoo{a,b}$, then
\[
 I_F^G(\xi, \Omega)=I_F^G(\xi, \Omega_1)+I_F^G(\xi, \Omega_2).
\]
\end{proposition}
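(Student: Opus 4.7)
The plan is to first handle the equivariantly non-degenerate case, where the defining sum partitions trivially, and then pass to a general $\xi$ by approximation using the density of equivariantly non-degenerate fields established in Theorem \ref{thm:equigenvect}.

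In the non-degenerate case, $\xi$ has only finitely many essential periodic orbits $\gamma_1,\dots,\gamma_n$, and I claim each lies entirely in one of $\Omega_1,\Omega_2$. Indeed, each flow trajectory contained in the disjoint open union $\Omega_1\cup\Omega_2$ is connected and therefore sits in a single piece; since both $\Omega_i$ are $G$-invariant, the $G$-saturation $\gamma_j$ does so as well. Partitioning $\{1,\dots,n\}$ accordingly gives
\[
 I_F^G(\xi,\Omega)=\sum_{\gamma_j\subseteq\Omega_1}I_G(\gamma_j)+\sum_{\gamma_j\subseteq\Omega_2}I_G(\gamma_j)=I_F^G(\xi,\Omega_1)+I_F^G(\xi,\Omega_2).
\]

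For general $\xi$, I would pick a sequence of equivariantly non-degenerate fields $\xi_n\to\xi$ via Theorem \ref{thm:equigenvect} and verify two things. First, openness of the defining condition for $\frakX_G(M,\Omega_i,a,b)$ (continuous dependence of the flow applied to the compact boundary $\partial(\Omega_i\times\inoo{a,b})$, together with the observation that the hypothesis forces $\xi$ itself to have no periodic points on these boundaries) yields $\xi_n\in\frakX_G(M,\Omega_i,a,b)$ for $i=1,2$ once $n$ is large, so the right-hand side is well defined for $\xi_n$. Second, every essential periodic orbit of $\xi_n$ must still be contained in $\Omega_1\cup\Omega_2$. For the latter, I would argue by contradiction: if essential periodic points $(x_{n_k},T_{n_k})$ of $\xi_{n_k}$ existed outside $(\Omega_1\cup\Omega_2)\times\inoo{a,b}$, compactness of $\overline{\Omega}\times\incc{a,b}$ together with continuity of the flow in $(\xi,x,t)$ would extract a limit $(x,T)$ with $\varphi_\xi(x,T)=x$, $x\in\overline{\Omega\setminus(\Omega_1\cup\Omega_2)}$ and $T\in\incc{a,b}$. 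This limit would either violate $\xi\in\frakX_G(M,\Omega,a,b)$ (if $T\in\{a,b\}$ or $x\in\partial\Omega$) or give an essential periodic point of $\xi$ outside the allowed region, contradicting the hypothesis.

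With these two points in hand, applying the non-degenerate case to each $\xi_n$ and passing to the limit yields the desired identity directly from the definition of the Fuller index for degenerate fields. The main obstacle is the stability-of-essential-orbits claim above: one must rule out the possibility that a small non-degenerate perturbation creates new essential periodic orbits outside $\Omega_1\cup\Omega_2$, which is exactly where the compactness of the essential periodic set of $\xi$, bounded away from the complement of $\Omega_1\cup\Omega_2$ by hypothesis, gets used. Implicit throughout is the well-definedness of the index on degenerate fields independent of the approximating sequence; this is part of the program developed alongside homotopy invariance in the remainder of the section.
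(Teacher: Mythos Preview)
Your argument is correct and the non-degenerate part is exactly what the paper has in mind: the paper's entire proof reads ``Immediate from the definitions'', i.e.\ the defining sum over the finitely many essential orbits splits according to whether $\gamma_j\subseteq\Omega_1$ or $\gamma_j\subseteq\Omega_2$. Where you go further is in treating the general (possibly degenerate) $\xi$ explicitly via approximation, checking that a non-degenerate $\xi_n$ close to $\xi$ still lies in $\frakX_G(M,\Omega_i,a,b)$ and still has all essential orbits in $\Omega_1\cup\Omega_2$. The paper does not spell this out; at the point where the proposition is stated the index for degenerate fields has only just been defined as a limit, and well-definedness is deferred to the end of the section, so the author evidently regards the general case as absorbed into the phrase ``immediate from the definitions'' together with the subsequent well-definedness argument. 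Your compactness/limit argument for stability of the essential set is the right way to make this honest, and your observation that $\partial\Omega_i\cap(\Omega_1\cup\Omega_2)=\varnothing$ (hence $\xi\in\frakX_G(M,\Omega_i,a,b)$) is the small point one actually needs. So: same approach, but you have filled in the approximation step that the paper leaves implicit.
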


\begin{proof} 
Immediate from the definitions.
\end{proof}
If $H:M\times I\to M$ is any equivariantly non-degenerate homotopy, we have finitely many bifurcation parameters. Let $\lambda$ be such a bifurcation parameter, then we call the periodic orbits 
of $H_\lambda$ which are the limit of a sequence of periodic orbits $\gamma_n$ of $H_{\lambda_n}$, $\lambda_n\to\lambda$, $\lambda_n\neq\lambda$, the limit periodic orbits of $H_\lambda$. We 
will choose equivariant Poincar\'{e} systems around these orbits and vary them slightly to obtain Poincar\'{e} homotopies.

\begin{proposition}\label{prop:fullerindexhomotopic} 
Let $\xi_0,\xi_1$ be two equivariantly non-degenerate $G$-vector fields that are $G$-homotopic via an equivariantly non-degenerate homotopy. Then their equivariant Fuller indices are equal.
\end{proposition}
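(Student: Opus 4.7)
The plan is to reduce the statement to a local analysis at bifurcation parameters of the given homotopy. Let $H\in\frakX_G(M\times\incc{0,1},\Omega,a,b)$ be the equivariantly non-degenerate $G$-homotopy connecting $\xi_0$ and $\xi_1$. Applying the generic bifurcation picture from Sections 5 and 6, together with Proposition \ref{prop:equiparatransnondeg} applied to the Poincaré homotopies induced by $H$ around each limit periodic orbit, one obtains finitely many bifurcation parameters $0<\lambda_1<\cdots<\lambda_N<1$ such that $H_\lambda$ is equivariantly non-degenerate for $\lambda$ outside this finite set. Between any two consecutive bifurcation parameters the essential periodic orbits of $H_\lambda$ form finitely many smooth branches of constant minimal period, multiplicity, and isotropy type, the last following from Proposition \ref{prop:equidimnondeg} applied to the associated Poincaré maps.

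First I would verify that $I_F^G(H_\lambda,\Omega)$ is constant on each open interval between consecutive bifurcation parameters. On such an interval, each branch of essential periodic orbits carries a smoothly varying equivariant Poincaré system by the stability proposition of Section 3, and the induced Poincaré maps form a $G$-homotopy of $G$-maps whose sets of fixed points vary smoothly. $G$-homotopy invariance of the equivariant fixed point index $i_G(\cdot,D',D)$, combined with the constancy of the multiplicity $k_j$ along each branch, then yields local constancy of $I_F^G$.

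The substantive content is showing invariance across each bifurcation parameter $\lambda_i$. I would select equivariant Poincaré systems around each limit periodic orbit of $H_{\lambda_i}$ and analyze the resulting Poincaré homotopies, which are $G$-homotopies on equivariant discs. Theorem \ref{thm:equithommathernondeg}(ii) together with the generic bifurcation picture shows that these homotopies exhibit only two kinds of bifurcations: (a) saddle-node bifurcations, where two branches of equal minimal period and isotropy meet and annihilate with opposite equivariant fixed point indices, by stratumwise transversality (Proposition \ref{prop:stratumwisetrans}) reducing each isotropy stratum to the classical fold picture; and (b) period-multiplying bifurcations, where a branch of minimal period $p$ and multiplicity $k$ spawns a new branch of minimal period $mp$ and multiplicity $k/m$ for some integer $m\geq 2$. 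In case (a) the contributions cancel directly. In case (b), the weights $\frac{1}{k}$ and $\frac{m}{k}$ combine so that the net contribution to $I_F^G$ is preserved, exactly as in the Chow--Mallet-Paret computation, by relating the equivariant fixed point index of the $m$-th iterate of the mother Poincaré map to the sum of the indices of the two nearby branches.

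The main obstacle will be the equivariant version of the period-multiplying analysis, because the new branch spawned at such a bifurcation may carry a strictly smaller isotropy type than its mother. Here one must verify that the equivariant fixed point indices transform compatibly in the tom Dieck ring $\setU_G$, so that after multiplication by the respective weights in $\setQ\tensor\setU_G$ one obtains exact cancellation on each isotropy component. The dimension count of Proposition \ref{prop:equidimnondeg} ensures that generically only isotropy types $(H)$ with $\dim W(H)\leq 1$ can participate in one-parameter families of fixed points of the Poincaré maps, so only finitely many isotropy strata can interact at a given bifurcation. This reduces the matter to a stratum-by-stratum calculation which, on each stratum, mirrors the non-equivariant argument, and is then reassembled using additivity of $i_G$ over the different orbit types.
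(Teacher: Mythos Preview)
Your overall structure---constancy between bifurcation parameters and local analysis at each---matches the paper's. However, your treatment at the bifurcation parameters contains a gap and diverges from the paper's argument in an essential way.

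You assert that Theorem~\ref{thm:equithommathernondeg}(ii) together with the generic bifurcation picture forces a dichotomy into saddle-node and period-multiplying bifurcations. Nothing in Sections~5--6 establishes such a classification: those results yield only finiteness of branches and of bifurcation parameters, plus an isotopy statement valid when \emph{every} stage is non-degenerate (which fails precisely at $\lambda_i$). In the equivariant setting one must also expect symmetry-breaking bifurcations in which isotropy drops without any change of period, and possibly more complicated interactions; you yourself flag the isotropy-change issue as the ``main obstacle'' and do not resolve it beyond saying it ``reduces to a stratum-by-stratum calculation.'' That reduction is exactly the hard part, and it is not supplied.

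The paper avoids any classification of bifurcation types. At a bifurcation parameter $\lambda$ with limit periodic orbit $\gamma$ of minimal period $p$, it fixes a single Poincar\'e system $(D,D',P,t)$ for $\gamma$ at \emph{minimal} period, and uses Corollary~\ref{cor:equicodimdisc} (genericity of homotopies without fixed points on boundaries of equivariant discs) to perturb the Poincar\'e homotopy so that every iterate satisfies $i_G(P_-^n,D',D)=i_G(P_+^n,D',D)$ by ordinary $G$-homotopy invariance of the equivariant fixed point index. The substance of the proof is then the purely combinatorial identity
\[
I_F^G(H_\pm,\Omega)=\sum_{n\cdot p\in\incc{a,b}}\frac{1}{n}\, i_G(P_\pm^n,D'),
\]
obtained by observing that a branch $\nu_s^k$ bifurcating from $\gamma$ with period $kp$ contributes $k$ fixed points of equal local index to $P_\pm^n$ whenever $k\mid n$. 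This rewriting makes invariance immediate, with no case analysis, no tracking of isotropy across branches, and no component-wise bookkeeping in $\setU_G$. Your proposed case-by-case route might be salvageable, but it is not what the paper does, and it is also not what Chow--Mallet-Paret do non-equivariantly; both rely on the iterate-index identity precisely to bypass the bifurcation classification you are attempting.
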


\begin{proof} 
Since the fields are equivariantly non-degenerately homotopic, we can distinguish between two cases.
\begin{enumerate}[1.]

   \item There are no bifurcation parameters in the interval $\incc{\lambda_1,\lambda_2}$. In this case, we choose disjoint equivariant Poincar\'{e} systems around the finitely many periodic 
   orbits of $H_{\lambda_1}$. These systems will be Poincar\'{e} systems around the periodic orbits for $\lambda_1+\eps$ for $\eps>0$ small. The Poincar\'{e} maps induce a non-degenerate 
   homotopy of Poincar\'{e} maps, so the local fixed point indices of the fixed points corresponding to periodic orbits do not change. By compactness of $\incc{\lambda_1,\lambda_2}$, the 
   equivariant Fuller index does not change during this part of the homotopy. 
   \item There is precisely one bifurcation parameter $\lambda\in\inoo{\lambda_1,\lambda_2}$. In this case, $H_\lambda$ is degenerate. Let $\gamma$ be a limit periodic orbit of $H_\lambda$ and 
   $p$ be its minimal period. Let $(D, D', P, t)$ be an equivariant Poincar\'{e} system for $\gamma$, considered with minimal period $p$. By choosing $D$ small enough, there is an extension of $P$ to an 
   equivariant Poincar\'{e} homotopy, again denoted by $P$, for $\mu\in\incc{\lambda-\eps,\lambda+\eps}$ and some $\eps>0$, and we can achieve that the only fixed points of $P_\mu$ lying in $D$ are those 
   on branches converging to $\gamma$. Denote the finitely many branches converging to $\gamma$ from the left of $\lambda$ by
   \[
    \nu_1^k, \dots, \nu_{r_k}^k,
   \]
   where $k$ runs through the integers and indicates that the minimal period of $\nu_j^k$ approaches $k\cdot p$ for $j=1,\dots, r_k$ as the branch approaches $\gamma$. Let $P_-=P(-\eps)$, $P_+=P(\eps)$. 
   We choose small equivariant discs $M_1^k,\dots, M_{r_k}^k$, centered at the fixed points of $P_-$ corresponding to the geometric orbits $\nu_1^k(-\eps), \dots, \nu_{r_k}^k(-\eps)$. Furthermore, we 
   choose equivariant subdiscs ${M'}_1^k\subseteq M_1^k, \dots$, such that $P_-$ restricts to a map ${{M'_j}^k}\to M_j^k$, $j=1,\dots, r_k$, for all $k$ involved, and the iterates of $P_-$ do so as well. 
   We need only finitely many iterates of $P_-$, hence this condition can be fulfilled. We have a homotopy $P$ between $P_-$ and $P_+$ which is equivariantly non-degenerate at every stage except for the 
   parameter $\lambda$. By Theorem \ref{cor:equicodimdisc}, we find a homotopy $P'$ arbitrarily close to $P$ that is non-degenerate and has no fixed points on the union of the boundaries of the 
   discs ${M'}_j^k$. In particular, $P'_-$ has all its fixed points inside of the discs ${M'}_j^k$ for the various $j, k$ and $P'_-$ is admissibly homotopic to $P_-$, i.e. their fixed point indices are 
   equal. But then, also $P_-$ and $P_+$ are admissibly homotopic, so we find 
   \[
    i_G(P_-^k, D', D)=i_G(P_+^k, D', D)
   \]
   for all $k$. For simplicity, write $H_{\lambda-\eps}=H_-$, $H_{\lambda+\eps}=H_+$. We claim that the equivariant Fuller indices are given by the sums
   \[
    I^G_F(H_-, \Omega)=\sum_{\ssmall{n\cdot p\in\incc{a,b}}}\frac1n\cdot i_G(P_-^n, D'),
   \]
   \[
    I^G_F(H_+, \Omega)=\sum_{\ssmall{n\cdot p\in\incc{a,b}}}\frac1n\cdot i_G(P_+^n, D'),
   \]
   which would immediately yield equality of the two terms.

   We calculate
   \[
    I^G_F(H_-, \Omega)=\sum_{\ssmall{j\cdot k\cdot p\in\incc{a,b}}}\sum_{s=1}^{r_{jk}}\frac1j i_G(P_-^{jk}, M_s^k).
   \]
   On the other hand, to calculate the fixed point index of $P_-^n$ in $D'$, note that the branches $\nu_s^k$ bifurcate with period $k\cdot p$ from $\gamma$. Such a bifurcation induces the bifurcation 
   of a $k$-fold covering space of $\setS^1$ in the group quotient. That is, we have $k$ fixed points of the $k$-th iterate of the Poincar\'{e} map $P_-$ bifurcating, all of which have the same local 
   index, since their Poincar\'{e} systems are equivariantly isotopic via the flow. Their index is given by $i_G(P_-^k, M_s^k)$. Hence, we have a contribution of $k\cdot i_G(P_-^k, M_s^k)$ of these 
   fixed points to the fixed point index of $P_-^k$ in $D'$. Clearly, if $k$ divides $n$, then $P_-^n$ has these fixed points in $M_s^k$ as well, and these contribute $k\cdot i_G(P_-^n, M_s^k)$ to the 
   index of $P_-^n$. Summing all these indices up, we obtain
   \begin{eqnarray*}
   \frac1n i_G(P_-^n, D')&=&\sum_{k\cdot j=n}\sum_{s=1}^{r_n}\frac kn\cdot i_G(P_-^n, M_s^k)\\
                         &=&\sum_{k\cdot j=n}\sum_{s=1}^{r_{jk}}\frac1j\cdot i_G(P_-^{jk}, M_s^k).
   \end{eqnarray*}
   This finally gives
   \[
   \sum_{\ssmall{n\cdot p\in\incc{a,b}}}\frac1n i_G(P_-^n, D')=\sum_{\ssmall{j\cdot k\cdot p\in\incc{a,b}}}\sum_{s=1}^{r_{jk}}\frac1j\cdot i_G(P_-^{jk}, 
   M_s^k)=I^G_F(H_-, \Omega).
   \]
   The whole calculation did not depend on the fact that we were working with $H_-$ instead of $H_+$, and we get the same calculation on the right hand side, verifying 
   equality of both equivariant Fuller indices.
\end{enumerate}
   
Since the equivariant Fuller index remains unchanged in both cases and we have only finitely many bifurcation parameters, the proposition follows. 
\end{proof}

This was the hard part of the invariance theorem. The rest follows easily by standard methods.

\begin{lemma} 
If two equivariantly non-degenerate vector fields 
\[
 \xi_0, \xi_1\in\frakX_G(M, \Omega, a, b) 
\]
are equivariantly homotopic, then they are already equivariantly non-degenerate\-ly homotopic.
\end{lemma}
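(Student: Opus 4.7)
The plan is to take any $G$-equivariant homotopy joining $\xi_0$ to $\xi_1$, use Theorem \ref{thm:equigenvect} to perturb it into an equivariantly non-degenerate homotopy, and then bridge the two resulting small gaps at the endpoints by short linear homotopies that remain inside the open set of non-degenerate vector fields.

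More precisely, I would first invoke the hypothesis to obtain some equivariant homotopy $H\in\frakX_G(M\times I, \Omega, a, b)$ between $\xi_0$ and $\xi_1$. The density part of Theorem \ref{thm:equigenvect} then yields an equivariantly non-degenerate homotopy $H'$ that is Whitney-close to $H$. In general $H'_0\neq\xi_0$ and $H'_1\neq\xi_1$, but since equivariant non-degeneracy of a single field is an open condition (it amounts to $G$-transversality of the flow graph map to the universal variety, which is open by Theorem \ref{thm:equithommather}), we may arrange that $H'_0$ and $H'_1$ lie in arbitrarily small neighborhoods $\calV_0$ and $\calV_1$ of $\xi_0$, $\xi_1$ on which every vector field is already equivariantly non-degenerate.

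Next I would insert the linear interpolations $\eta^0_s=(1-s)\xi_0+sH'_0$ and $\eta^1_s=(1-s)H'_1+s\xi_1$, whose images lie entirely inside $\calV_0$ and $\calV_1$ respectively, provided $H'$ is chosen sufficiently close to $H$. By construction every field appearing in $\eta^0$ or $\eta^1$ is equivariantly non-degenerate. The key observation needed is that fiberwise non-degeneracy of a parametrized family already implies non-degeneracy of the homotopy itself: if $\Gamma$ denotes the graph map of the parametrized flow over $M\times\setR\times\incc{0,1}$, its derivative at each point contains as a subspace the derivative of the fiberwise graph map in the same point, so stratumwise transversality of every fiber to the universal variety forces stratumwise transversality of the total map. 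Concatenating $\eta^0$, $H'$ and $\eta^1$ and smoothing the parameter variable at the joints (keeping the result inside the open space $\frakX_G(M\times I, \Omega, a, b)$) produces the desired equivariantly non-degenerate homotopy.

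The step I expect to be most delicate is the fiberwise-to-total passage for the linear bridges, because equivariant non-degeneracy is not transparently stable under enlarging the source to include the homotopy parameter. It is forced, however, by reading equivariant non-degeneracy through the graph-map formalism of Section 5 together with the basic fact that transversality to a Whitney stratification is preserved when extra free directions are added to the source tangent space. Once this point is settled, the remaining verifications --- openness of $\frakX_G(M\times I, \Omega, a, b)$, openness of equivariant non-degeneracy, and the standard smoothing at the concatenation points --- are routine.
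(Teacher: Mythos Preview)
Your proposal is correct and follows essentially the same route as the paper: perturb the given homotopy to an equivariantly non-degenerate one via Theorem \ref{thm:equigenvect}, then bridge the endpoints inside small open neighbourhoods $\calU_0,\calU_1$ of $\xi_0,\xi_1$ consisting entirely of equivariantly non-degenerate fields. The paper phrases the bridging step as ``all elements of $\calU_0$ are pairwise homotopic via a homotopy not leaving $\calU_0$, hence equivariantly non-degenerately homotopic,'' which is exactly your linear interpolations $\eta^0,\eta^1$ together with the fiberwise-to-total passage; you are simply more explicit than the paper about why a homotopy whose every time-slice is non-degenerate is itself non-degenerate as a parametrized object.
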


\begin{proof} 
Let $\calU_0$ be a neighbourhood of $\xi_0$ such that all elements of $\calU_0$ are equivariantly non-degenerate. We can furthermore achieve that all elements of $\calU_0$ are pairwise homotopic via a 
homotopy not leaving $\calU_0$. Hence, all elements of $\calU_0$ are equivariantly non-degenerately homotopic. We can find a similar neighbourhood $\calU_1$ of $\xi_1$. Now if $H\in\frakX_G(M\times\setR, \Omega,
a, b)$ is a homotopy joining $\xi_0$ and $\xi_1$, we find an equivariantly non-degenerate homotopy $K$ arbitrarily close to $H$. In particular we can find such a $K$ so that $K_0\in\calU_0$, $K_1\in
\calU_1$. Pasting together $K$ and equivariantly non-degenerate homotopies joining $\xi_0$ with $K_0$ and $K_1$ with $\xi_1$, respectively, we obtain an equivariantly non-degenerate homotopy joining 
$\xi_0$ and $\xi_1$.
\end{proof} 

We have finally arrived at the result that the equivariant Fuller index is a $G$-homotopy invariant.

\begin{theorem} 
The Fuller index is invariant under admissible $G$-homotopies.
\end{theorem}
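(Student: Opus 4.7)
The plan is to combine well-definedness of the index on arbitrary admissible fields with Proposition \ref{prop:fullerindexhomotopic} and the preceding lemma, in order to transfer invariance from the non-degenerate case to the general case.

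First, I would check that $I_F^G(\xi, \Omega)$ is well-defined for an arbitrary $\xi \in \frakX_G(M, \Omega, a, b)$. Given two sequences of equivariantly non-degenerate fields converging to $\xi$, both eventually lie in any prescribed neighbourhood $\calU$ of $\xi$ inside $\frakX_G(M, \Omega, a, b)$ (such $\calU$ exists because admissibility is an open condition). Any two elements of $\calU$ are connected by a straight-line homotopy in $\calU$, hence by an admissible $G$-homotopy; by the preceding lemma this homotopy can be replaced by an equivariantly non-degenerate one, and Proposition \ref{prop:fullerindexhomotopic} then forces the two indices to agree. So both sequences yield the same eventually-constant value, and $I_F^G(\xi, \Omega)$ is well-defined.

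Next, let $H \in \frakX_G(M \times I, \Omega, a, b)$ be an admissible $G$-homotopy from $\xi_0$ to $\xi_1$. By Theorem \ref{thm:equigenvect}, the set of equivariantly non-degenerate homotopies is dense in $\frakX_G(M \times I, \Omega, a, b)$, so I can choose an equivariantly non-degenerate admissible homotopy $K$ arbitrarily close to $H$. In particular, $K_0$ and $K_1$ are equivariantly non-degenerate fields in $\frakX_G(M, \Omega, a, b)$ arbitrarily close to $\xi_0$ and $\xi_1$, respectively. Proposition \ref{prop:fullerindexhomotopic} applied to $K$ gives
\[
 I_F^G(K_0, \Omega) = I_F^G(K_1, \Omega),
\]
and by well-definedness established in the first step, the left side equals $I_F^G(\xi_0, \Omega)$ and the right side equals $I_F^G(\xi_1, \Omega)$.

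The main obstacle is the well-definedness step: it requires the approximating non-degenerate fields to be connected by an admissible homotopy (so that Proposition \ref{prop:fullerindexhomotopic} applies), which is why openness of admissibility in $\frakX_G(M, \Omega, a, b)$ is essential. Once that is in place, everything else is a direct combination of Theorem \ref{thm:equigenvect}, the preceding lemma, and Proposition \ref{prop:fullerindexhomotopic}.
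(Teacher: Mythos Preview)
Your proof is correct and follows essentially the same strategy as the paper: approximate the given admissible homotopy $H$ by an equivariantly non-degenerate one $K$ via Theorem \ref{thm:equigenvect}, apply Proposition \ref{prop:fullerindexhomotopic} to $K$, and then identify $I_F^G(K_i,\Omega)$ with $I_F^G(\xi_i,\Omega)$. The only difference is organizational: you establish well-definedness of the index for arbitrary admissible fields \emph{before} proving the theorem, whereas the paper invokes local constancy inside the proof and records well-definedness as a corollary afterward; the underlying arguments (convex neighbourhoods in $\frakX_G(M,\Omega,a,b)$, the preceding lemma, and Proposition \ref{prop:fullerindexhomotopic}) are the same.
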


\begin{proof} 
If $\xi_0, \xi_1\in\frakX_G(M,\Omega, a, b)$ are homotopic equivariant vector fields and the element $H\in\frakX_G(M\times\setR, \Omega, a, b)$ is a $G$-homotopy between them, choose an equivariantly 
non-degenerate homotopy $K\in\frakX_G(M\times\setR, \Omega, a, b)$ such that $K_0$ is in a given neighbourhood $\calU_0$ of $\xi_0$, $K_1$ is in a given neighbourhood $\calU_1$ of $\xi_1$. Since the Fuller index 
is locally constant in $\xi$, the theorem follows from Proposition \ref{prop:fullerindexhomotopic}.
\end{proof}
 
Being a homotopy invariant implies that the index is locally constant in the set of equivariantly non-degenerate fields, so it is well-defined for arbitrary $G$-vector fields by approximation.

\begin{corollary} 
The equivariant Fuller index is locally constant and hence well-defined.
\end{corollary}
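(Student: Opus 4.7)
The plan is to establish local constancy on the open dense subset of equivariantly non-degenerate fields first, and then to deduce that the approximation-based definition is consistent at every point of $\frakX_G(M,\Omega,a,b)$.

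First I would note that the admissibility condition defining $\frakX_G(M,\Omega,a,b)$ is open (no periodic points on the compact boundary $\partial(\Omega\times\inoo{a,b})$), and within it the set of equivariantly non-degenerate fields is open by part (i) of Theorem \ref{thm:equigenvect}. So for any equivariantly non-degenerate $\xi_0\in\frakX_G(M,\Omega,a,b)$ I can pick a convex (say ball-shaped in an appropriate Whitney neighborhood) neighborhood $\calU$ of $\xi_0$ that lies entirely inside the set of equivariantly non-degenerate elements of $\frakX_G(M,\Omega,a,b)$.

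Next, for any $\xi_1\in\calU$ the straight-line homotopy $\xi_t=(1-t)\xi_0+t\xi_1$ stays inside $\calU$, hence yields an equivariantly non-degenerate homotopy belonging to $\frakX_G(M\times\setR,\Omega,a,b)$. The preceding $G$-homotopy invariance theorem then gives $I_F^G(\xi_1,\Omega)=I_F^G(\xi_0,\Omega)$, which establishes local constancy of the Fuller index on the non-degenerate stratum.

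For well-definedness at a general $\xi\in\frakX_G(M,\Omega,a,b)$, I would use density (Theorem \ref{thm:equigenvect}) to pick approximating sequences $\xi_n,\eta_n$ of equivariantly non-degenerate fields converging to $\xi$. For $n$ large, both sequences enter a common convex non-degenerate neighborhood $\calU$ constructed as above around some non-degenerate field sufficiently close to $\xi$; local constancy then forces the sequences $I_F^G(\xi_n,\Omega)$ and $I_F^G(\eta_n,\Omega)$ to stabilize at a common value, so the limit defining $I_F^G(\xi,\Omega)$ exists and is independent of the approximating sequence.

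No step should present a real obstacle: the only potential subtlety is ensuring that the straight-line homotopy between two nearby non-degenerate fields is itself equivariantly non-degenerate, but this is immediate once $\calU$ is taken convex inside the open set of non-degenerate admissible fields, bypassing any appeal to the more delicate density construction of Theorem \ref{thm:equigenvect}.
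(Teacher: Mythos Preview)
Your local-constancy argument on the non-degenerate locus is correct and is essentially the paper's reasoning spelled out (once one observes that slice-wise non-degeneracy of each $\xi_t$ forces equivariant non-degeneracy of the one-parameter family, since the extra parameter direction in the domain of the graph map can only help transversality to $\Sigma$). One minor point: you should invoke Proposition~\ref{prop:fullerindexhomotopic} directly rather than ``the preceding $G$-homotopy invariance theorem'', since the latter's proof already appeals to local constancy and the citation would be circular.

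The well-definedness step, however, has a real gap. You want both approximating sequences $\xi_n,\eta_n\to\xi$ to eventually lie in a common convex neighbourhood $\calU$ consisting entirely of non-degenerate fields. But if $\xi$ is itself degenerate, no such $\calU$ contains $\xi$, hence none contains a neighbourhood of $\xi$; the radii of these convex non-degenerate balls around nearby non-degenerate fields may well shrink to zero as one approaches $\xi$, so there is no reason the two sequences (or even successive terms of a single sequence) land in one and the same $\calU$. What is actually needed is that any two non-degenerate fields in a small \emph{admissible} ball around $\xi$ can be joined by a non-degenerate homotopy, and this is precisely the content of the Lemma preceding the theorem, which in turn rests on density of equivariantly non-degenerate homotopies (Theorem~\ref{thm:equigenvect}). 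So, contrary to your final remark, the density result for homotopies is not bypassed; it re-enters at the well-definedness step.
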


We summarize all the properties of the equivariant Fuller index established so far. 

\begin{theorem} 
The equivariant Fuller index has the following properties.
\begin{enumerate}[1.]

   \item It is invariant under admissible $G$-homotopies, i.e. if $H\in\frakX_G(M\times\setR, \Omega, a, b)$, then
   \[
    I^G_F(H_t, \Omega)\equiv\mbox{const.}
   \]

   \item It is additive, i.e. if $\Omega_1,\Omega_2$ are disjoint invariant open subsets of $\Omega$ and all essential periodic orbits of $\xi\in\frakX_G(M, \Omega, a, b)$ are contained in 
   $\Omega_1\cup\Omega_2$, then
   \[
    I^G_F(\xi, \Omega)=I^G_F(\xi, \Omega_1)+I^G_F(\xi, \Omega_2).
   \]
   
   \item It is normalized. If $\xi$ has a single periodic orbit in $\Omega\times\inoo{a,b}$ of periodicity $k$ and $(P, D', D, t)$ is an equivariant Poincar\'{e} system for the orbit, then
   \[
    I_F^G(\xi,\Omega)=\frac1k\tensor i_G(P, D', D).
   \]
   
   \item It has the solution property. If the projection $\pi_{(H)}$ to the $(H)$-component of $\setQ\tensor\setU_G$ of $I^G_F(\xi, \Omega)$ is not zero, then $\xi$ has an essential periodic orbit in 
   $\Omega$ of type at least $(H)$.
   
\end{enumerate}
\end{theorem}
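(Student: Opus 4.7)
The plan is to observe that three of the four properties have essentially already been established earlier in the section, so the proof is mostly a matter of collecting the relevant references, while the fourth (the solution property) is the only genuinely new content and requires a short argument using the defining property of the equivariant fixed point index.

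First, for property (1), I would simply cite the theorem immediately preceding this one, which states that the equivariant Fuller index is invariant under admissible $G$-homotopies. Property (2) was already proven as Proposition \ref{prop:equifulleradd}. Property (3) is essentially the definition: if $\xi$ is equivariantly non-degenerate with a single essential periodic orbit of minimal period $p$ and period $T = kp$, then the defining sum $\sum_{j}\frac{1}{k_j}\otimes i_G(P_j,D'_j,D_j)$ collapses to the single term $\frac{1}{k}\otimes i_G(P,D',D)$; for a general $\xi$, one approximates by an equivariantly non-degenerate field (Theorem \ref{thm:equigenvect}) close enough that the index, which is locally constant, is unchanged, and the approximation still has a single periodic orbit in the given region by the structural analysis used in Proposition \ref{prop:fullerindexhomotopic}.

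The main step is property (4). Suppose $\pi_{(H)}(I^G_F(\xi,\Omega)) \neq 0$. By Theorem \ref{thm:equigenvect} and local constancy, we may approximate $\xi$ by an equivariantly non-degenerate field $\xi'$ arbitrarily close to $\xi$ with $I^G_F(\xi',\Omega) = I^G_F(\xi,\Omega)$, and any essential periodic orbit of $\xi'$ lies close to an essential periodic orbit of $\xi$. For $\xi'$, the index is a finite sum $\sum_{j=1}^n \frac{1}{k_j}\otimes i_G(P_j,D'_j,D_j)$, so non-vanishing of the $(H)$-component forces some summand to have non-vanishing $(H)$-component, i.e.\ there is a $j$ with $\pi_{(H)} i_G(P_j,D'_j,D_j) \neq 0$. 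By the fundamental property of the equivariant fixed point index (derived from the equivariant Lefschetz number, see \cite{laitinen} or \cite{ynrohc}), this implies that $P_j$ has a fixed $G$-orbit of isotropy type at least $(H)$ in $D'_j$. This fixed orbit of $P_j$ corresponds to a periodic orbit $\gamma'_j$ of $\xi'$, which has isotropy type at least $(H)$ since isotropy types are preserved by the Poincaré construction. Letting $\xi' \to \xi$, the periodic orbits $\gamma'_j$ (staying in the compact set $\conj{\Omega}\times\incc{a,b}$) have a subsequence converging to a periodic orbit $\gamma$ of $\xi$; by upper semi-continuity of isotropy (isotropy types only increase under limits), $\gamma$ has isotropy type at least $(H)$.

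The only potentially subtle point is the preservation of isotropy under taking limits of periodic orbits, but this is standard: if $x_n \to x$ with $G_{x_n} \supseteq H$ for all $n$ (up to conjugacy), then $G_x \supseteq H$ up to conjugacy as well, because the fixed point set $M^H$ is closed. This closes the argument.
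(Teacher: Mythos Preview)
Your proposal is correct and follows the same approach as the paper, which dispatches all four items in a single sentence (``All statements are obvious, the last one being a trivial consequence of the corresponding result for the fixed point index''). Your argument for property (4) simply spells out what the paper leaves implicit; the one place you add more than needed is property (3), where the extra discussion of approximating a general $\xi$ and claiming the approximation ``still has a single periodic orbit'' is both unnecessary (the normalization is really just the defining formula in the isolated case) and not quite true as stated, since a degenerate isolated orbit can split into several under perturbation.
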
 

\begin{proof} 
All statements are obvious, the last one being a trivial consequence of the corresponding result for the fixed point index.
\end{proof}

\newpage


\begin{thebibliography}{99}
\bibitem{balanov}
Z. Balanov, W. Krawcewicz, \and H. Steinlein, `Applied Equivariant Degree', (AIMS, 2006). 

\bibitem{bierstone}
 E. Bierstone, `General Position of Equivariant Maps', {\em Trans. Amer. Math. Soc. }234 (1977) 2.

\bibitem{bierstone2}
 E. Bierstone, `Generic Equivariant Maps' {\em Real and Complex Singularities}, (Sijthoff \& Noordhoff, Alphen an den Rijn, 1977).

\bibitem{bochnak}
 J. Bochnak \and M. Coste \and M.-F. Roy, `Real Algebraic Geometry', (Springer, 1998).

\bibitem{bredon}
 G. E. Bredon, `Compact Transformation Groups', (Academic Press, New York and London, 1972).

\bibitem{ynrohc}
 B. Chorny, `Equivariant Cellular Homology and Its Applications', in {\em High-Dimensional Manifold Topology}, (World Scientific, 2003).

\bibitem{chowmallet}
 S.-N. Chow \and J. Mallet-Paret, `Fuller Index and Global Hopf Bifurcation', {\em J. Diff. Eqns. }29 (1978).

\bibitem{dzedzej}
 Z. Dzedzej, `Fixed Orbit Index for Equivariant Maps', {\em Nonlinear Analysis }47 (2001).

\bibitem{field3}
 M. Field, `Transversality in G-Manifolds', {\em Trans. Amer. Math. Soc. }231 (1977).

\bibitem{field1}
 M. Field, `Equivariant Dynamical Systems', {\em Trans. Amer. Math. Soc. }259 (1980).

\bibitem{field4}
 M. Field, `Equivariant Bifurcation Theory and Symmetry Breaking', {\em J. Dynamics and Diff. Eqns. }1 (1989) 4.

\bibitem{field2}
 M. Field, `Local Structure of Equivariant Dynamics', (Springer, LNM 1463, 1991).

\bibitem{field}
 M. Field, `Dynamics and Symmetry', (Imperial College Press, London, 2007).

\bibitem{fuller}
 B. Fuller, `An Index of Fixed Point Type for Periodic Orbits', {\em  American Journal of Mathematics }89 (1967).

\bibitem{gibson} 
 C. Gibson, K. Wirthm\"{u}ller, A. du Plessis, \and E. Looijenga, `Topological Stability of Smooth Mappings', (Springer, LNM 552, 1976).

\bibitem{hirsch} 
 M. Hirsch, C. Pugh, \and M. Shub, `Invariant Manifolds', (Springer, LNM 583, 1977).

\bibitem{izevignoli} 
 J. Ize and A. Vignoli, `Equivariant Degree Theory', (de Gruyter, New York, 2003).

\bibitem{laitinen}
 E. Laitinen and W. L\"{u}ck, `Equivariant Lefschetz Classes', {\em Osaka J. Math. }26 (1989).

\bibitem{lueck} 
 W. L\"{u}ck \and J. Rosenberg, `The Equivariant Lefschetz Fixed Point Theorem for Proper Cocompact G-Manifolds', in {\em High-Dimensional Manifold Topology},
(World Scientific, 2003).

\bibitem{mather} 
 J. Mather, `Stratifications and Mappings', {\em  Trans. Amer. Math. Soc. }259 (1980).

\bibitem{murayama} 
 M. Murayama, `On G-ANRs and Their Homotopy Type', {\em  Osaka J. Math. }20 (1983).

\bibitem{nussbaum} 
 R. Nussbaum, `Generalizing the Fixed Point Index', {\em  Math. Ann. }228 (1977).

\bibitem{palis} 
 J. Palis \and W. de Melo, `Geometric Theory of Dynamical Systems', (Springer, New York, 1982).
\end{thebibliography}
\end{document}